\documentclass[a4paper,11pt,oneside]{article}
\usepackage{graphicx}
\usepackage{amscd}
\usepackage{amsmath}
\usepackage{caption}
\usepackage{amsfonts}
\usepackage{amssymb}
\usepackage{amsthm}
\usepackage{mathrsfs}
\usepackage{multicol}
\usepackage{color}
\usepackage[english]{babel}
\usepackage[T1]{fontenc}
\usepackage{textcomp}
\usepackage[utf8]{inputenc}
\usepackage{enumitem}
\usepackage{indentfirst}

\setlength{\textwidth}{16cm}

\setlength{\textheight}{22.5cm}

\setlength{\topmargin}{-1.5cm}

\setlength{\oddsidemargin}{-1mm}

\setlength{\evensidemargin}{-1mm}

\setlength{\abovedisplayskip}{3mm}

\setlength{\belowdisplayskip}{3mm}

\setlength{\abovedisplayshortskip}{0mm}

\setlength{\belowdisplayshortskip}{2mm}
\newcommand{\N}{\mathbb N}

\newcommand{\RR}{{{\rm I} \kern -.15em {\rm R} }}

\begin{document}
	\theoremstyle{plain} \newtheorem{thm}{Theorem}[section] \newtheorem{cor}[thm]{Corollary} \newtheorem{lem}[thm]{Lemma} \newtheorem{prop}[thm]{Proposition} \theoremstyle{definition} \newtheorem{defn}{Definition}[section] 
	
	\newtheorem{oss}[thm]{Remark}
	\newtheorem{ex}{Example}[section]
	\newtheorem{lemma}{Lemma}[section]
	\title{First and second-order Cucker-Smale models with non-universal interaction, time delay and communication failures}
	\author{Chiara Cicolani\footnote{ Email: chiara.cicolani@graduate.univaq.it. },\hspace{0.2cm}Elisa Continelli\footnote{Email: elisa.continelli@graduate.univaq.it.}\hspace{0.3cm}\&\hspace{0.2cm}Cristina Pignotti\footnote{Email: cristina.pignotti@univaq.it.} \\Dipartimento di Ingegneria e Scienze dell'Informazione e Matematica\\
		Universit\`{a} degli Studi di L'Aquila\\
		Via Vetoio, Loc. Coppito, 67100 L'Aquila Italy}

	\maketitle

	\begin{abstract}
		In this paper, we deal with first and second-order alignment models with non-universal interaction, time delay and possible lack of connection between the agents. More precisely, we analyze the situation in which the system's agents do not transmit information to all the other agents  and also agents that are linked to each other can suspend their interaction at certain times. Moreover, we take into account of possible time lags in the interactions. To deal with the considered "non-universal" connection, a graph topology over the structure of the model has to be considered. Under a so-called Persistence Excitation Condition, we establish the exponential convergence to consensus for both models whenever the digraph that describes the interaction between the agents is strongly connected.
	\end{abstract}

	\providecommand{\keywords}[1]{\textbf{Keywords:} #1}
	\keywords{alignment models; Cucker-Smale model; non-universal interaction; communication failures; time delay.}
	
	\vspace{5 mm}
	
	\section{Introduction}
	Multiagent systems have been deeply investigated in these last years, due to their wide application to several scientific disciplines, among others biology \cite{Cama, CS1}, economics \cite{Marsan}, robotics \cite{Bullo}, control theory \cite{Borzi, PRT, WCB}, social sciences \cite{Bellomo, CF, Campi}. Among them, there is the Hegselmann-Krause opinion formation model \cite{HK} and its second-order version, the Cucker-Smale model, introduced in \cite{CS1} for the description of flocking phenomena, such as the flocking of birds, the swarming of bacteria and the schooling of fish (see also \cite{Ha1, Ha2}). Also, let us mention the Kuramoto model \cite{Kuramoto}, that describes the behavior of a large set of coupled oscillators.  Typically, for the solutions of the aforementioned multiagent systems, the approach to consensus, in the case of the Hegselmann-Krause model, the exhibition of asymptotic flocking, in the case of the Cucker-Smale  model, and the asymptotic synchronization, in the case of the case of the Kuramoto model, are investigated.
	
	In multiagent systems, it is important  to consider the presence  of time delay effects. Indeed, in the applications, one has to take into account of certain time lags due to propagation of information among the agents or to reaction times. 
	
	The analysis of the Hegselmann-Krause model and of the Cucker-Smale model in presence of time delays (that can be constant or, more realistically, varying in time), has been carried out by many authors, \cite{CH, CL, CPP, CP, DH, H, H3, HM, LW, Lu,  P, PT}. Most of them require an upper bound on the time delay size to get the convergence to consensus. However, Rodriguez Cartabia proved in \cite{Cartabia} the asymptotic flocking for the Cucker-Smale model with constant time delay without requiring any smallness condition on the time delay size (see also \cite{H3} for a consensus result for the Hegselmann-Krause model). Generalizing and extending the arguments employed in \cite{Cartabia}, the exponential convergence to consensus for the Hegselmann-Krause model and the exhibition of asymptotic flocking for the Cucker-Smale model with time-variable time delay have been proved in \cite{ContPign} and \cite{Cont}, respectively, without assuming the time delay size to be small. Also, in \cite{Cont, ContPign} monotonicity assumptions on the influence function, which are usually required when dealing with the Hegselmann-Krause model and with the Cucker-Smale model, are removed, namely the influence function is assumed to be just positive bounded and continuous. In the same spirit, the asymptotic synchronization of the Kuramoto oscillators with time delay and non-universal interaction has been achieved in \cite{ChoiCicoPi}.
	
	A possible scenario that could occur in the analysis of such models is the one in which the system's particles suspend the interactions they have with the other agents. As a consequence, there is a lack of connection between the system's elements that, of course, obstacles the convergence to consensus, for the first-order model, or the flocking for the second-order one. Then, it is important to find conditions guaranteeing the system's alignment.
	
	In \cite{Bonnet}, the convergence to consensus and the asymptotic flocking for a class of Cucker-Smale systems under communication failures among the system's agents have been proved under suitable assumptions in the case of symmetric interaction coefficients. Also, in \cite{AnconaRossi}, the approach to consensus for a first-order alignment system involving weights depending on the couple of agents that can eventually degenerate has been proved under a Persistence Excitation Condition. In the case of nonsymmetric interaction coefficients, the exponential convergence to consensus for the Hegselmann-Krause model with time delay and lack of interaction has been obtained in \cite{ContPi}.
	
	Also, it could happen that the agents involved in an opinion formation or flocking process are not able to exchange information with all the other components of the system. In this case, we are in presence of a non-universal interaction, so that the agents are able to influence only the opinions or the velocities of the particles they are linked to. To deal with this kind of interaction, a graph topology over the structure of the model has to be considered (see \cite{ChoiCicoPi}). 
	
	In this paper, we analyze the asymptotic behavior of the solutions to first and second-order alignment models,  i.e. Hegselmann-Krause and Cucker-Smale models,  in presence of (pair and time-dependent) time delays, non-universal interaction, and lack of connection. Namely, in these models, weight functions depending on the agents' pair and that can degenerate at certain times  are considered. In this case, the interaction is lacking not only among agents that are not linked each other but also among agents that are generally able to exchange information. Under a Persistence Excitation Condition, we establish the exponential consensus and flocking for the Hegselmann-Krause opinion formation model and for the Cucker-Smale model whenever the digraph that describes the interaction among the agents is strongly connected. 

This is done dealing with a very general influence function and without requiring any smallness asssumption on the time delay size. Our result is very general and greatly improves
previous related works \cite{Bonnet, ContPi, AnconaRossi}.  Indeed, in \cite{AnconaRossi}
only the first-order model is analyzed and  the convergence to consensus is not achieved exponentially fast. Furthermore, time delay effects are not considered. Here, we work in a more general setting since there are particles that could never communicate with each other, and we consider time delays, pair and time depending. This wide generality  requires finer and sophisticated arguments.
With respect to \cite{ContPi}, where only the first-order model is analyzed, the main novelties are the more general weight and time delay functions (now pair-dependent). Moreover, here, we work in the network topology setting. Finally, \cite{Bonnet} deals with the second-order model too. However, the analysis requires symmetry conditions on  the weight functions and all-to-all interaction. Moreover, no time delays are included.

	The present paper is so organized. In Section \ref{timedelay}, we establish the exponential convergence to consensus for the solutions of the Hegselmann-Krause model, namely the first-order alignment model.
In Section \ref{secdelay}, the discussion is extended to the second-order version, namely the Cucker-Smale model. The proof of the exponential flocking for the Cucker-Smale model requires a more careful analysis with respect to the one carried out in Section \ref{timedelay}, since in this case we have to prove that the agents uniform their velocities and the distances between the agents' positions are bounded. 
	\section{The first-order alignment model}\label{timedelay}
	Consider a finite set of $N\in\N$ agents, with $N\geq 2 $. Let $x_{i}(t)\in \RR^d$ be the opinion of the $i$-th agent at time $t$. We shall denote with $\lvert\cdot \rvert$ and $\langle\cdot,\cdot\rangle$ the usual norm and scalar product on $\RR^{d}$, respectively. Let us denote with $\N_0:=\N\cup\{0\}.$ The interactions between the elements of the system are described by the following Hegselmann-Krause type model:
	\begin{equation}\label{onoff}
		\frac{d}{dt}x_{i}(t)=\underset{j:j\neq i}{\sum}\chi_{ij}\alpha_{ij}(t) b_{ij}(t)(x_{j}(t-\tau_{ij}(t))-x_{i}(t)),\quad t>0,	\,\, \forall i=1,\dots,N.
	\end{equation}
		where the time delay functions $\tau_{ij}:[0,+\infty)\rightarrow[0,+\infty)$ are assumed to be continuous and satisfy the following:
	\begin{equation}\label{taubounded}
		0\leq \tau_{ij}(t)\leq \tau,\quad \forall t\geq 0, \,\forall i,j=1,\dots,N,
	\end{equation}
	for a suitable positive constant $\tau$. \\Here, the communication rates $b_{ij}$ are of the form
	\begin{equation}\label{weight}
		b_{ij}(t):=\frac{1}{N-1}\psi( x_{i}(t), x_{j}(t-\tau_{ij}(t))), \quad\forall t>0,\, \forall i,j=1,\dots,N,
	\end{equation}
	where the influence function $\psi:\RR^d\times\RR^d\rightarrow \RR$ is positive, bounded and continuous and
	\begin{equation}\label{K}
		K:=\lVert \psi\rVert_{\infty}.
	\end{equation} 
	The terms $\chi_{ij}$ are so defined
	\begin{equation}\label{chiij}
		\begin{cases}
			1,\quad \text{if } j \text{ transmits information to } i,\\
			0,\quad \text{otherwise}.
		\end{cases}
	\end{equation}
The weight functions $\alpha_{ij}:[0,+\infty)\rightarrow [0,1]$ are $\mathcal{L}^1$-measurable and satisfy the following Persistence Excitation Condition (cf. \cite{Bonnet, AnconaRossi}): 

\begin{itemize}
\item [\bf (PE)]
there exist two positive constants $T$ and $\tilde{\alpha}$ such that
\begin{equation}\label{PE}
	\int_{t}^{t+T}\alpha_{ij}(s)ds\geq \tilde{\alpha},\quad \forall t\geq 0,
\end{equation}
for all $i,j=1,\dots,N$ such that $\chi_{ij}=1$. Without loss of generality, we can assume that $\tilde{\alpha}K\leq 1$. 
\end{itemize}

Let us note that \eqref{PE} becomes relevant when $T$ is large and $\tilde{\alpha}$ is small. In this case, the agents could eventually suspend their interaction for long enough.  We also point out that, in the case in which $\alpha_{ij}(t)=1$, for a.e. $t\geq 0$ and for any $i,j=1,\dots,N$, i.e. in the case in which the agents do not interrupt their exchange of information, the condition \eqref{PE} is of course satisfied.
\\Due to the presence of the time delay, the initial conditions are functions defined in the interval $[-\tau, 0].$ The initial conditions
	\begin{equation}\label{incond}
		x_{i}(s)=x^{0}_{i}(s),\quad \forall s\in [-\tau,0],\,\forall i=1,\dots,N,
	\end{equation}
	are assumed to be continuous functions. 
	\\We set
	\begin{equation}\label{M0}
		C_{0}:=\max_{i=1,\dots,N}\,\,\max_{s\in [-\tau, 0]}\lvert x_{i}(s)\rvert,
	\end{equation} \begin{equation}\label{psi0}
		\psi_{0}:=\min_{\vert y\vert, \vert z\vert \le  C_{0}}\psi(y,z).
	\end{equation}
	We will consider a graph topology over the model structure. Let $ \mathcal{G}=(\mathcal{V}, \mathcal{E})$ be a digraph consisting of a finite set $\mathcal{V}=\{1,...,N \}$ of vertices and a set $\mathcal{E} \subset \mathcal{V} \times \mathcal{V}$ of arcs. We assume that opinions of the agents are located at the vertices and interact each other via the underlying network topology. For each vertex $i$, we denote by $\mathcal{N}_i$ the set of vertices that directly influence the vertex $i$, namely
	\begin{equation}\label{N_i}
		\mathcal{N}_i:=\{j= 1,\dots,N:\chi_{ij}=1\}.
	\end{equation} 
	The set $\mathcal{N}_i$ can also be defined in the following way: $j \in \mathcal{N}_i$ if and only if $(i,j)\in \mathcal{E}$. Also, we denote with
	\begin{equation}\label{cardN_i}
		N_i:=|\mathcal{N}_i|.
	\end{equation}
	Throughout the paper, we will exclude self loops, i.e. we assume that $i \notin \mathcal{N}_i$ for all $1 \leq i \leq N.$ We also denote the network topology via its $(0,1)$-adjacency matrix $(\chi_{ij})_{ij}$. A \emph{path} in a digraph $\mathcal{G}$ from $i_0$ to $i_p$ is a finite sequence $i_0, i_1, \dots, i_p$ of distinct vertices such that each successive pair of vertices is an arc of $\mathcal{G}.$ The integer p is called \emph{length} of the path. If there exists a path from $i$ to $j$, then vertex $j$ is said to be \emph{reachable} from vertex $i$ and we define the distance from $i$ to $j$, in notation $\text{dist}(i,j)$, as the length of the shortest path from $i$ to $j$. A digraph $\mathcal{G}$ is said to be \emph{strongly connected} if each vertex is reachable from any other vertex. We assume that our digraph $\mathcal{G}$ is strongly connected. We define the \emph{depth} $\gamma$ of the digraph as follows:
	\begin{equation}\label{gamma}
		\gamma:=\max_{i,j =1,\dots, N} \text{dist}(i,j). 
	\end{equation}
	Thus, any particle can be connected to the other individuals of the system via no more than $\gamma$ intermediate agents. By definition, since $i\notin\mathcal{N}_i$, for all $i=1,\dots,N$, we have that $\gamma \leq N-1.$ Also, since the digraph is strongly connected, the depth $\gamma\geq 1$.

	Now, we give the rigorous definition of convergence to consensus for solutions of the Hegselmann-Krause model \eqref{onoff}. We define the diameter $d(\cdot)$ of the solution as
	$$d(t):=\max_{i,j=1,\dots,N}\lvert x_{i}(t)-x_{j}(t)\rvert,\quad \forall t\geq-\tau.$$
	
	\begin{defn}\label{consensus}
		We say that a solution $\{x_{i}\}_{i=1,\dots,N}$ to system \eqref{onoff} converges to \textit{consensus} if $d(t)\to0$, as $t\to \infty$.
	\end{defn}
	We will prove the following exponential convergence to consensus result.
	\begin{thm}\label{consgenonoff}
		Assume \eqref{taubounded} and that the digraph $\mathcal{G}$ is strongly connected. Let $\psi:\RR^d\times \RR^d\rightarrow\RR$ be a positive, bounded, continuous function. Assume that the weight functions $\alpha_{ij}:[0,+\infty)\rightarrow[0,1]$ are ${\mathcal L^1}$-measurable and satisfy {\bf (PE)}. Let $x_i^0:[-\tau,0]\rightarrow\RR^d$ be a continuous function, for any $i=1,\dots,N$. Then, every solution $\{x_{i}\}_{i=1,\dots,N}$ to \eqref{onoff} with the initial conditions \eqref{incond} satisfies the following exponential decay estimate
		\begin{equation}\label{expconsonoff}
			d(t)\leq\left( \max_{i,j =1,\dots, N}\max_{r,s\in[-\tau,0]}\lvert x_i(r)-x_j(s)\rvert \right)e^{- C(t-\gamma(T+\tau)-\tau)},\quad \forall t\geq 0,
		\end{equation}
		where $\gamma>0$ is the depth of the digraph, $T$ is the positive constant in \eqref{PE} and $C$ is a suitable positive constant.
	\end{thm}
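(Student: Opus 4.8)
The plan is to reduce the vector equation to a family of scalar estimates and to prove a uniform geometric contraction of a suitable windowed diameter over time steps of length $\Delta:=\gamma(T+\tau)+\tau$. Fix a unit vector $e\in\RR^d$, set $v_i(t):=\langle x_i(t),e\rangle$, and project \eqref{onoff} onto $e$ to obtain the scalar system $\dot v_i(t)=\sum_{j\neq i}w_{ij}(t)\bigl(v_j(t-\tau_{ij}(t))-v_i(t)\bigr)$ with nonnegative weights $w_{ij}(t):=\chi_{ij}\alpha_{ij}(t)b_{ij}(t)$ and total weight $W_i(t):=\sum_j w_{ij}(t)\le K$. I would first establish boundedness by a maximum-principle argument: the running maximum $\max_i\sup_{s\in[t-\tau,t]}v_i(s)$ is non-increasing and the corresponding minimum is non-decreasing, whence $|x_i(t)|\le C_0$ for all $t$ and therefore $b_{ij}(t)\ge\psi_0/(N-1)$ whenever $\chi_{ij}=1$, by \eqref{psi0}. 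This positive lower bound on the active communication rates is what makes the condition \eqref{PE} usable.

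Next, fix a step starting at $t_0\ge0$, put $M:=\max_i\sup_{s\in[t_0-\tau,t_0]}v_i(s)$, $m:=\min_i\inf_{s\in[t_0-\tau,t_0]}v_i(s)$, $D_0:=M-m$, and pass to the excess variable $y_i:=M-v_i\ge0$, which solves $\dot y_i=\sum_j w_{ij}\bigl(y_j(t-\tau_{ij}(t))-y_i\bigr)$. The crucial point is that a single fixed seed agent can be kept large throughout the step: choosing $\ell$ and $s_*\in[t_0-\tau,t_0]$ with $v_\ell(s_*)=m$ and discarding the nonnegative terms $w_{\ell k}y_k(t-\tau_{\ell k}(t))\ge0$ gives $\dot y_\ell\ge -W_\ell y_\ell\ge -Ky_\ell$, hence $y_\ell(t)\ge D_0\,e^{-K(t-s_*)}$ for all $t\ge s_*$, and in particular $y_\ell(t)\ge\rho_0:=D_0\,e^{-K(\Delta+\tau)}$ on the whole interval $[s_*,\,t_0+\Delta]$.

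Then I would prove a propagation lemma transferring largeness along arcs: if $\chi_{ij}=1$ and $y_j(s)\ge\rho$ for all $s$ in a window $[\sigma,\sigma']$, then for $t\in[\sigma+\tau+T,\sigma']$ one keeps only the $j$-term, $\dot y_i(t)\ge -Ky_i(t)+\tfrac{\psi_0}{N-1}\alpha_{ij}(t)\,y_j(t-\tau_{ij}(t))$; since $s-\tau_{ij}(s)\in[\sigma,\sigma']$ for $s\in[t-T,t]$, an integrating-factor computation over $[t-T,t]$ together with \eqref{PE} yields $y_i(t)\ge c\rho$ with $c:=\tfrac{\psi_0\tilde\alpha}{N-1}e^{-KT}\in(0,1)$. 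Thus largeness propagates, losing a factor $c$ and shrinking the window from the left by $\tau+T$ per step. Because $\mathcal G$ is strongly connected of depth $\gamma$ and largeness propagates opposite to the arc orientation, every agent lies within $\gamma$ such steps of $\ell$; iterating the lemma from the seed window $[s_*,\,t_0+\Delta]$ shows that every agent $a$ satisfies $y_a(t)\ge c^\gamma\rho_0$ on the terminal window $[\,t_0+\gamma(T+\tau),\,t_0+\Delta\,]$, which has length exactly $\tau$.

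Finally, this terminal window of length $\tau$ is precisely what lets me control the windowed maximum: $M(t_0+\Delta)\le M-c^\gamma\rho_0=M-\delta D_0$ with $\delta:=c^\gamma e^{-K(\Delta+\tau)}\in(0,1)$, and since the windowed minimum is non-decreasing, the windowed diameter $D(t):=M(t)-m(t)$ obeys $D(t_0+\Delta)\le(1-\delta)D(t_0)$, uniformly in $t_0$ and in $e$. Iterating from $t_0=0$ gives $D(n\Delta)\le(1-\delta)^nD(0)$, hence $D(t)\le D(0)\,e^{-C(t-\Delta)}$ with $C:=|\ln(1-\delta)|/\Delta$; choosing for each $t$ the direction $e^*=(x_{i^*}(t)-x_{j^*}(t))/|x_{i^*}(t)-x_{j^*}(t)|$ realizing $d(t)$ and noting $d(t)\le D_{e^*}(t)$ and $D_{e^*}(0)\le\max_{i,j}\max_{r,s\in[-\tau,0]}|x_i(r)-x_j(s)|$ delivers \eqref{expconsonoff}. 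I expect the main obstacle to be the time bookkeeping in the last two steps: one must simultaneously respect the window length $T$ forced by \eqref{PE}, absorb the delay buffers $\tau$ arising from the retarded arguments, and keep the fixed seed alive across all $\gamma$ rounds, and it is exactly the demand that the terminal window retain length $\tau$ (so that $M(t_0+\Delta)$ can be bounded) that produces the shift $\gamma(T+\tau)+\tau$ in the exponent.
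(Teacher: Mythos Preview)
Your argument follows the paper's proof closely: project onto a unit vector, establish invariance of the windowed extrema (the paper's Lemmas~\ref{L1}--\ref{lemmadiamonoff}), seed at an extremal agent, propagate a lower bound on the ``excess'' along the strongly connected digraph via Gronwall and {\bf (PE)} (the paper's Proposition~\ref{lemma 3onoff}), and conclude a geometric contraction of the windowed diameter over steps of length $\Delta=\gamma(T+\tau)+\tau$. Your propagation lemma, integrating over a sliding window of length $T$, even yields a slightly sharper contraction factor than the paper's (linear rather than quadratic $\gamma$--dependence in the exponential).

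There is one oversight worth flagging. For the very first step $t_0=0$, your seed time $s_*$ may lie in $[-\tau,0)$, where only continuous initial data are prescribed and the differential inequality $\dot y_\ell\ge -K y_\ell$ is \emph{not} available; the bound $y_\ell(t)\ge D_0\,e^{-K(t-s_*)}$ therefore cannot be claimed on $[s_*,0]$, and in fact $y_\ell$ may vanish somewhere on that interval. The paper sidesteps this by seeding at the \emph{right endpoint} of the window (using $\tilde m_0^v=\min_i\langle x_i(0),v\rangle$ rather than $m_0^v$) and proving \emph{both} inequalities in \eqref{Bonoff}; the case analysis in the proof of Theorem~\ref{consgenonoff} then exploits $(M_n^v-\tilde m_n^v)+(\tilde M_n^v-m_n^v)\ge M_n^v-m_n^v$ to recover the full contraction, which your one--sided argument alone cannot deliver when $M-\tilde m$ happens to be small. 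Alternatively, your one--sided scheme is valid verbatim for every $t_0\ge\tau$ (then $s_*>0$), so you may start the iteration at $n=1$ and absorb the first block by monotonicity; this still gives exponential decay, but with the shift $2\Delta$ instead of the $\Delta$ appearing in \eqref{expconsonoff}.
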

	
	\subsection{Preliminary lemmas}
	Let $\{x_{i}\}_{i=1,\dots,N}$ be solution to \eqref{onoff} under the initial conditions \eqref{incond}. We assume that the hypotheses of Theorem \ref{consgenonoff} are satisfied. We present some auxiliary lemmas.
		\begin{defn}\label{quantonoff}
		Given a vector $v\in \mathbb{R}^d$, for all $n\in \mathbb{N}_0$ we define
		$$I_n:=[n(\gamma (T+\tau)+\tau)-\tau,n(\gamma (T+\tau)+\tau)]$$
		$$m_n^v:=\min_{i=1,\dots,N}\min_{s\in I_n}\,\langle x_{i}(s),v\rangle,$$
		$$M_n^v:=\max_{j=1,\dots,N}\max_{s\in I_n}\,\langle x_{j}(s),v\rangle.$$
		Also, we define, for all $n\in\mathbb{N}_0$,
		$$\tilde{m}_n^v:=\min_{i=1,\dots,N}\langle x_{i}(n(\gamma (T+\tau)+\tau)),v\rangle,$$
		$$\tilde{M}_n^v:=\max_{j=1,\dots,N}\langle x_{j}(n(\gamma (T+\tau)+\tau)),v\rangle.$$
	\end{defn} 
	\begin{lem}\label{L1}
		For each vector $v\in \RR^{d}$, we have that 
		\begin{equation}\label{scalpronoff}
			m_0^v\leq \langle x_{i}(t),v\rangle \leq M_0^v,
		\end{equation}for all  $t\geq -\tau$ and for any $i=1,\dots,N$.
	\end{lem}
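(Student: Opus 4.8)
The plan is to reduce the vector statement to a family of scalar maximum-principle estimates, one for each direction $v$, and then to exploit the nonnegativity of the coefficients in \eqref{onoff}. First I would fix $v\in\RR^d$ and set $f_i(t):=\langle x_i(t),v\rangle$. Taking the scalar product of \eqref{onoff} with $v$ and using linearity, each $f_i$ is absolutely continuous and satisfies
\begin{equation*}
\dot f_i(t)=\sum_{j:j\neq i}\chi_{ij}\alpha_{ij}(t)b_{ij}(t)\bigl(f_j(t-\tau_{ij}(t))-f_i(t)\bigr),\qquad t>0,
\end{equation*}
with the crucial feature that every coefficient $\chi_{ij}\alpha_{ij}(t)b_{ij}(t)$ is nonnegative: $\chi_{ij}\in\{0,1\}$ by \eqref{chiij}, $\alpha_{ij}(t)\in[0,1]$, and $b_{ij}(t)\ge 0$ since $\psi>0$ in \eqref{weight}. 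Moreover, by \eqref{K} these coefficients are bounded by $K/(N-1)$, so $\sum_{j\neq i}\chi_{ij}\alpha_{ij}(t)b_{ij}(t)\le K$. It then suffices to prove the upper bound $f_i(t)\le M_0^v$ for every $v$: the lower bound follows by applying it to $-v$, since $m_0^v=-M_0^{-v}$ and $\langle x_i(t),v\rangle=-\langle x_i(t),-v\rangle$.

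For the upper bound I would argue by contradiction through a barrier technique. Fix $\varepsilon>0$ and set $g_i(t):=f_i(t)-M_0^v-\varepsilon e^{t}$ for $t\ge-\tau$. By the definition of $M_0^v$ in Definition \ref{quantonoff} we have $f_i(s)\le M_0^v$ for all $s\in I_0=[-\tau,0]$, whence $g_i(s)<0$ there for every $i$. Suppose, for contradiction, that some $g_i$ becomes nonnegative, and let $t^\ast:=\inf\{t>0:\max_i g_i(t)\ge 0\}$. By continuity $t^\ast>0$, we have $g_i(t)<0$ for all $i$ and all $t\in[-\tau,t^\ast)$, and there is an index $i^\ast$ with $g_{i^\ast}(t^\ast)=0$, i.e. $f_{i^\ast}(t^\ast)=M_0^v+\varepsilon e^{t^\ast}$.

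The key point is that the delayed arguments $t^\ast-\tau_{i^\ast j}(t^\ast)$ lie in $[-\tau,t^\ast]$ by \eqref{taubounded}, so that $g_j\le 0$ there and hence $f_j(t^\ast-\tau_{i^\ast j}(t^\ast))\le M_0^v+\varepsilon e^{t^\ast}=f_{i^\ast}(t^\ast)$; this is precisely why the estimate must be propagated over the whole history interval. Since $\alpha_{i^\ast j}$ is only $\mathcal{L}^1$-measurable, $f_{i^\ast}$ need not be differentiable at $t^\ast$, so I would use left difference quotients. On the one hand, because $g_{i^\ast}(t^\ast)=0>g_{i^\ast}(t^\ast-h)$, for small $h>0$,
\begin{equation*}
\frac{f_{i^\ast}(t^\ast)-f_{i^\ast}(t^\ast-h)}{h}>\varepsilon\,\frac{e^{t^\ast}-e^{t^\ast-h}}{h},
\end{equation*}
whose right-hand side tends to $\varepsilon e^{t^\ast}>0$ as $h\to 0^+$. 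On the other hand, writing $f_{i^\ast}(t^\ast)-f_{i^\ast}(t^\ast-h)=\int_{t^\ast-h}^{t^\ast}\dot f_{i^\ast}(s)\,ds$ and using that for $s\in(t^\ast-h,t^\ast)$ each bracket satisfies $f_j(s-\tau_{i^\ast j}(s))-f_{i^\ast}(s)<-g_{i^\ast}(s)$ together with $\sum_{j\neq i^\ast}\chi_{i^\ast j}\alpha_{i^\ast j}b_{i^\ast j}\le K$, one gets
\begin{equation*}
\frac{f_{i^\ast}(t^\ast)-f_{i^\ast}(t^\ast-h)}{h}\le K\,\sup_{s\in[t^\ast-h,t^\ast]}\bigl(-g_{i^\ast}(s)\bigr),
\end{equation*}
whose right-hand side tends to $0$ as $h\to 0^+$, by continuity of $g_{i^\ast}$ and $g_{i^\ast}(t^\ast)=0$. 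Combining the two displays yields $\varepsilon e^{t^\ast}\le 0$, a contradiction. Hence $f_i(t)<M_0^v+\varepsilon e^{t}$ for all $t\ge-\tau$ and all $i$; letting $\varepsilon\to 0^+$ gives $f_i(t)\le M_0^v$, as required.

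The main obstacle is making this maximum principle rigorous under the two simultaneous complications: the time delay forces the comparison to involve past values $f_j(t-\tau_{ij}(t))$ and therefore requires the bound to hold uniformly on the entire history interval $[-\tau,t]$ (this is exactly why the quantities in Definition \ref{quantonoff} are taken over the thickened interval $I_0$ rather than at a single instant), while the mere $\mathcal{L}^1$-regularity of the weights $\alpha_{ij}$ prevents one from evaluating $\dot f_{i^\ast}$ at $t^\ast$ and forces the integral/difference-quotient formulation above. The strictly increasing barrier $\varepsilon e^{t}$ is what converts the non-strict inequality $\dot f_{i^\ast}(t^\ast)\le 0$, which is all the sign structure yields, into the strict contradiction needed to close the argument.
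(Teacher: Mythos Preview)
Your argument is correct and shares the same skeleton as the paper's: reduce to a scalar problem via $f_i(t)=\langle x_i(t),v\rangle$, introduce an $\varepsilon$-perturbed barrier, argue by contradiction, and deduce the lower bound from the upper one by replacing $v$ with $-v$. The execution, however, differs. The paper uses a \emph{constant} barrier $M_0^v+\varepsilon$ and Gronwall's inequality on the interval $(0,S^\varepsilon)$ to obtain the quantitative bound $\langle x_i(t),v\rangle\le M_0^v+\varepsilon-\varepsilon e^{-Kt}$, which stays strictly below $M_0^v+\varepsilon$ and forces $S^\varepsilon=+\infty$. You instead use a \emph{growing} barrier $M_0^v+\varepsilon e^{t}$ and a first-touching-time maximum principle, handling the lack of pointwise differentiability through left difference quotients. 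Your approach is a bit more elementary (no Gronwall needed) and makes the role of the nonnegativity of the coefficients very transparent; the paper's Gronwall route has the advantage that the same inequality $\frac{d}{dt}\langle x_i,v\rangle\le K(M_0^v+\varepsilon-\langle x_i,v\rangle)$ is reused verbatim in the later propagation estimates (Proposition~\ref{lemma 3onoff}), so it dovetails more directly with the rest of the argument.
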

\begin{proof}
	First of all, we note that the inequalities in \eqref{scalpronoff} are satisfied for every  $t\in [-\tau,0]$.
	\\Now, let $v\in \RR^{d}$. For all $\epsilon >0$, we define
	$$K^{\epsilon}:=\left\{t>0 :\max_{i=1,\dots,N}\langle x_{i}(s),v\rangle < M_0^v+\epsilon,\,\forall s\in [0,t)\right\},$$
	and $$S^{\epsilon}:=\sup K^{\epsilon}.$$
	By continuity, we have that $K^{\epsilon}\neq\emptyset$ and $S^{\epsilon}>0$. 
	\\We claim that $S^{\epsilon}=+\infty$. Indeed, suppose by contradiction that $S^{\epsilon}<+\infty$. By definition of $S^{\epsilon}$, it turns out that \begin{equation}\label{max}
		\max_{i=1,\dots,N}\langle x_{i}(t),v\rangle<M_0^v+\epsilon,\quad \forall t\in (0,S^{\epsilon}),
	\end{equation}
	\begin{equation}\label{teps}
		\lim_{t\to S^{\epsilon-}}\max_{i=1,\dots,N}\langle x_{i}(t),v\rangle=M_0^v+\epsilon.
	\end{equation}
	For all $i=1,\dots,N$ and $t\in (0,S^{\epsilon})$, we have that
	$$\frac{d}{dt}\langle x_{i}(t),v\rangle=\frac{1}{N-1}\sum_{j:j\neq i}\chi_{ij}\alpha_{ij}(t)\psi(x_{i}(t), x_{j}(t-\tau_{ij}(t)))\langle x_{j}(t-\tau_{ij}(t))-x_{i}(t),v\rangle.$$
	Now, being $t\in (0,S^{\epsilon})$, it holds that $t-\tau_{ij}(t)\in (-\tau, S^{\epsilon})$. Then, from \eqref{max}
	\begin{equation}\label{t-tau}
		\langle x_{j}(t-\tau_{ij}(t)),v\rangle< M_0^v+\epsilon,\quad \forall j=1, \dots, N,
	\end{equation}
	where hare we have used the fact that the second inequality in \eqref{scalpronoff} is satisfied in $[-\tau,0]$.
	\\Therefore, using \eqref{K}, \eqref{max}, \eqref{t-tau} and recalling that $\chi_{ij},\alpha_{ij}\leq 1$, for a.e. $t\in (0, S^{\epsilon})$ we can write $$\frac{d}{dt}\langle x_{i}(t),v\rangle\leq \frac{1}{N-1}\sum_{j:j\neq i}\chi_{ij}\alpha_{ij}(t)\psi(x_{i}(t), x_{j}(t-\tau_{ij}(t)))(M_0^v+\epsilon-\langle x_{i}(t),v\rangle)$$$$\leq K(M_0^v+\epsilon-\langle x_{i}(t),v\rangle).$$
	Thus, the Gronwall's inequality yields
	$$\begin{array}{l}
		\vspace{0.2cm}\displaystyle{
			\langle x_{i}(t),v\rangle\leq e^{-Kt}\langle x_{i}(0),v\rangle+K(M_0^v+\epsilon)\int_{0}^{t}e^{-K(t-s)}ds}\\
		\vspace{0.3cm}\displaystyle{\hspace{1.7 cm}
			=e^{-Kt}\langle x_{i}(0),v\rangle+(M_0^v+\epsilon)e^{-Kt}(e^{Kt}-1)}\\
		\vspace{0.3cm}\displaystyle{\hspace{1.7 cm}
			=e^{-Kt}\langle x_{i}(0),v\rangle+(M_0^v+\epsilon)(1-e^{-Kt})}\\
		\vspace{0.3cm}\displaystyle{\hspace{1.7 cm}
			\leq e^{-Kt}M_0^v+M_0^v+\epsilon -M_0^ve^{-Kt}-\epsilon e^{-Kt}}\\
		\vspace{0.3cm}\displaystyle{\hspace{1.7 cm}
			=M_0^v+\epsilon-\epsilon e^{-Kt}}\\
		\displaystyle{\hspace{1.7 cm}
			\leq M_0^v+\epsilon-\epsilon e^{-KS^{\epsilon}},}
	\end{array}
	$$
	for all $t\in (0, S^{\epsilon})$.	We have so proved that, $\forall i=1,\dots, N,$
	$$\langle x_{i}(t),v\rangle\leq M_0^v+\epsilon-\epsilon e^{-KS^{\epsilon}}, \quad \forall t\in (0,S^{\epsilon}).$$
	Thus, we get
	\begin{equation}\label{lim}
		\max_{i=1,\dots,N} \langle x_{i}(t),v\rangle\leq M_0^v+\epsilon-\epsilon e^{-KS^{\epsilon}}, \quad \forall t\in (0,S^{\epsilon}).
	\end{equation}
	Letting $t\to S^{\epsilon-}$ in \eqref{lim}, from \eqref{teps} we have that $$M_0^v+\epsilon\leq M_0^v+\epsilon-\epsilon e^{-KS^{\epsilon}}<M_0^v+\epsilon,$$
	which is a contradiction. Thus, $S^{\epsilon}=+\infty$ and $$\max_{i=1,\dots,N}\langle x_{i}(t),v\rangle<M_0^v+\epsilon, \quad \forall t>0.$$
	From the arbitrariness of $\epsilon$ we can conclude that $$\max_{i=1,\dots,N}\langle x_{i}(t),v\rangle\leq M_0^v, \quad \forall t>0,$$
	from which $$\langle x_{i}(t),v\rangle\leq M_0^v, \quad \forall t>0, \,\forall i=1,\dots,N.$$
	So, the second inequality in \eqref{scalpronoff} is proven. 
	\\Now, to show that the other inequality holds, fix $v\in \RR^{d}$. Then, for all $i=1,\dots,N$ and $t>0$, by applying the second inequality in \eqref{scalpronoff} to the vector $-v\in\RR^{d}$ we get $$-\langle x_{i}(t),v\rangle=\langle x_{i}(t),-v\rangle\leq \max_{j=1,\dots,N}\max_{s\in[-\tau,0]}\langle x_{j}(s),-v\rangle$$$$=-\min_{j=1,\dots,N}\min_{s\in [-\tau,0]}\langle x_{j}(s),v\rangle=-m_0^v,$$
	from which $$\langle x_{i}(t),v\rangle\geq m_0^v,\quad \forall t\geq 0,\,\forall i=1,\dots,N.$$
	Thus, also the first inequality in \eqref{scalpronoff} is fulfilled.
\end{proof}
Using the same arguments employed in the proof of the previous lemma, one can prove the following more general result.
	\begin{lem}\label{L1onoff}
	For each vector $v\in \RR^{d}$ and for all $n\in \mathbb{N}_0$, we have that 
	\begin{equation}\label{scalprn}
		m_n^v\leq \langle x_{i}(t),v\rangle \leq M_n^v,
	\end{equation}for all  $t\geq n(\gamma (T+\tau)+\tau)-\tau$ and for any $i=1,\dots,N$.
\end{lem}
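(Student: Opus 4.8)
The plan is to recycle the continuity-plus-Gronwall bootstrap of Lemma \ref{L1} verbatim, merely translating the time origin from $0$ to $t_n := n(\gamma(T+\tau)+\tau)$ and replacing the reference interval $[-\tau,0]$ by $I_n = [t_n-\tau,\,t_n]$. The single structural feature that makes the original argument portable is that $I_n$ has length exactly $\tau$: by the delay bound \eqref{taubounded}, every delayed argument $t-\tau_{ij}(t)$ with $t\geq t_n$ satisfies $t-\tau_{ij}(t)\geq t_n-\tau$, so it never escapes below the left endpoint of $I_n$. This is the only place where the precise geometry of $I_n$ enters, and I expect it to be the sole point that needs genuine checking; everything else is a mechanical reproduction of the earlier estimate.

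First I would record the base case: for $s\in I_n$ the inequalities $m_n^v\leq\langle x_i(s),v\rangle\leq M_n^v$ hold for every $i$ directly from Definition \ref{quantonoff}. Then, for the upper bound, I fix $\epsilon>0$ and set
$$K_n^{\epsilon}:=\left\{t>t_n:\max_{i=1,\dots,N}\langle x_i(s),v\rangle<M_n^v+\epsilon,\ \forall s\in[t_n,t)\right\},\qquad S_n^{\epsilon}:=\sup K_n^{\epsilon}.$$
By continuity $K_n^{\epsilon}\neq\emptyset$ and $S_n^{\epsilon}>t_n$. Arguing by contradiction, I suppose $S_n^{\epsilon}<+\infty$. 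For $t\in(t_n,S_n^{\epsilon})$ the delayed argument lands either in $I_n$, where $\langle x_j(t-\tau_{ij}(t)),v\rangle\leq M_n^v<M_n^v+\epsilon$ by the base case, or in $(t_n,S_n^{\epsilon})$, where the defining property of $K_n^{\epsilon}$ gives the same strict bound; in both cases $\langle x_j(t-\tau_{ij}(t)),v\rangle<M_n^v+\epsilon$. Inserting this into the expression for $\frac{d}{dt}\langle x_i(t),v\rangle$ and using $\psi\leq K$ together with $\chi_{ij},\alpha_{ij}\leq1$ yields, for a.e. $t\in(t_n,S_n^{\epsilon})$, the differential inequality $\frac{d}{dt}\langle x_i(t),v\rangle\leq K(M_n^v+\epsilon-\langle x_i(t),v\rangle)$. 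Integrating via Gronwall and using $\langle x_i(t_n),v\rangle\leq M_n^v$ exactly as before produces $\langle x_i(t),v\rangle\leq M_n^v+\epsilon-\epsilon e^{-K(S_n^{\epsilon}-t_n)}$ on $(t_n,S_n^{\epsilon})$; letting $t\to(S_n^{\epsilon})^-$ then contradicts the limit $M_n^v+\epsilon$ forced by maximality. Hence $S_n^{\epsilon}=+\infty$, and letting $\epsilon\to0$ gives $\langle x_i(t),v\rangle\leq M_n^v$ for all $t\geq t_n$, which together with the base case is the right-hand inequality in \eqref{scalprn} on the full range $t\geq t_n-\tau$.

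Finally, the left-hand inequality follows by applying the established upper bound to the vector $-v$ and using $M_n^{-v}=-m_n^v$, precisely as in the closing step of Lemma \ref{L1}. As noted, the one delicate ingredient is the dichotomy controlling the delayed argument, which rests entirely on the matching of the delay bound $\tau_{ij}\leq\tau$ with the length $\tau$ of $I_n$; once that is secured, the Gronwall computation is word-for-word the one already performed, so I would simply indicate the shift rather than rewrite it.
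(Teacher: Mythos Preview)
Your proposal is correct and is precisely the approach the paper indicates: the paper does not write out a separate proof of Lemma~\ref{L1onoff} but simply remarks that one uses the same arguments as in Lemma~\ref{L1}, and your write-up is exactly that argument with the time origin shifted from $0$ to $t_n=n(\gamma(T+\tau)+\tau)$ and the reference interval $[-\tau,0]$ replaced by $I_n$. Your identification of the key structural point---that $I_n$ has length $\tau$, so delayed arguments $t-\tau_{ij}(t)$ with $t\geq t_n$ never fall below $t_n-\tau$---is the one genuine observation needed to make the translation work, and it is correct.
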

Now, we define the following quantities.
\begin{defn}\label{Dn}
For all $n\in \mathbb{N}$, we define
	$$D_n:=\max_{i,j =1,\dots, N}\max_{r,s\in I_n}\lvert x_i(r)-x_j(s)\rvert.$$
\end{defn}
Let us note that, for $n=0$, 
$$D_0:=\max_{i,j =1,\dots, N}\max_{r,s\in I_0}\lvert x_i(r)-x_j(s)\rvert=\max_{i,j =1,\dots, N}\max_{r,s\in [-\tau,0]}\lvert x_i(r)-x_j(s)\rvert.$$
So, the exponential decay estimate in \eqref{expconsonoff} can be written as
$$d(t)\leq e^{-C(t-\gamma(T+\tau)-\tau)}D_0,\quad \forall t\geq 0.$$
	\begin{lem}\label{lemmadiamonoff}
		For each $n\in \mathbb{N}_0$, we have that 
		\begin{equation}\label{diamonoff}
			\lvert x_i(s)-x_j(t)\rvert\leq D_n,
		\end{equation}
		for all  $s,t\geq n(\gamma (T+\tau)+\tau)-\tau$ and for any $i,j=1,\dots,N$.
	\end{lem}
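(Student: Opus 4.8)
The plan is to reduce this vector-valued diameter bound to the scalar (projection) estimate already proven in Lemma \ref{L1onoff}, by projecting the difference $x_i(s)-x_j(t)$ onto its own direction. The key observation is that for any two points, their Euclidean distance is recovered as the scalar product with the unit vector joining them, so a uniform two-sided bound on all projections $\langle x_i(t),v\rangle$ immediately controls all pairwise distances.

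First I would fix indices $i,j$ and times $s,t\geq n(\gamma(T+\tau)+\tau)-\tau$. If $x_i(s)=x_j(t)$, then $|x_i(s)-x_j(t)|=0\leq D_n$ and there is nothing to prove, so I may assume $x_i(s)\neq x_j(t)$ and set
$$v:=\frac{x_i(s)-x_j(t)}{\lvert x_i(s)-x_j(t)\rvert}\in\RR^d,$$
which is a unit vector. With this choice,
$$\lvert x_i(s)-x_j(t)\rvert=\langle x_i(s)-x_j(t),v\rangle=\langle x_i(s),v\rangle-\langle x_j(t),v\rangle.$$
Since both $s$ and $t$ satisfy $s,t\geq n(\gamma(T+\tau)+\tau)-\tau$, Lemma \ref{L1onoff} applied to this $v$ gives $\langle x_i(s),v\rangle\leq M_n^v$ and $\langle x_j(t),v\rangle\geq m_n^v$. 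Combining these yields
$$\lvert x_i(s)-x_j(t)\rvert\leq M_n^v-m_n^v.$$

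It then remains to show $M_n^v-m_n^v\leq D_n$. By the definitions in Definition \ref{quantonoff} and compactness of $I_n$, there exist indices $k,l\in\{1,\dots,N\}$ and times $r,r'\in I_n$ realizing the max and the min, so that $M_n^v=\langle x_k(r),v\rangle$ and $m_n^v=\langle x_l(r'),v\rangle$. Using the Cauchy--Schwarz inequality and $\lvert v\rvert=1$,
$$M_n^v-m_n^v=\langle x_k(r)-x_l(r'),v\rangle\leq\lvert x_k(r)-x_l(r')\rvert\leq D_n,$$
where the last inequality is exactly the definition of $D_n$ in Definition \ref{Dn}, since $r,r'\in I_n$. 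Chaining the two displayed estimates gives $\lvert x_i(s)-x_j(t)\rvert\leq D_n$, which is the claim.

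I do not expect any genuine obstacle here: the argument is a clean projection trick, and every ingredient is either a definition or the already-established Lemma \ref{L1onoff}. The only point requiring a moment's care is recording that the direction $v$ is a legitimate fixed vector in $\RR^d$ to which Lemma \ref{L1onoff} applies, and that the times $s,t$ lie in the admissible range $[\,n(\gamma(T+\tau)+\tau)-\tau,\,+\infty)$ so that the scalar bounds $m_n^v$ and $M_n^v$ are valid at those instants.
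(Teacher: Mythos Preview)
Your proof is correct and follows essentially the same route as the paper: project onto the unit vector $v=(x_i(s)-x_j(t))/|x_i(s)-x_j(t)|$, apply Lemma~\ref{L1onoff} to bound the difference by $M_n^v-m_n^v$, and then use Cauchy--Schwarz together with the definition of $D_n$ to conclude. The paper's version is slightly terser (it does not explicitly name the extremizing $k,l,r,r'$), but the argument is identical.
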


	\begin{proof}
	Fix $n\in \mathbb{N}_0$. Let $i,j=1,\dots,N$ and $s,t\geq n(\gamma (T+\tau)+\tau)-\tau$. Then, if $\lvert x_i(s)-x_j(t)\rvert=0$, \eqref{diamonoff} is obviously satisfied. So we can assume $\lvert x_i(s)-x_j(t)\rvert>0$. Let us define the unit vector 
	$$v=\frac{x_i(s)-x_j(t)}{\lvert x_i(s)-x_j(t)\rvert}.$$
	Then, using \eqref{scalprn} and Cauchy-Schwarz inequality, we have that 
	$$\lvert x_i(s)-x_j(t)\rvert=\langle x_i(s)-x_j(t),v\rangle=\langle x_i(s),v\rangle-\langle x_j(t),v\rangle\leq M_n^v-m_n^v$$$$\leq \max_{k,l=1,\dots,N}\max_{r,\sigma\in I_n}\lvert x_k(r)-x_l(\sigma)\rvert=D_n.$$
\end{proof}
\begin{oss}
	Note that \eqref{diamonoff} yields
	\begin{equation}\label{diamonoff2}
		d(t)\leq D_n,\quad \forall t\geq n(\gamma (T+\tau)+\tau)-\tau.
	\end{equation}
Moreover, from \eqref{diamonoff} it comes that
\begin{equation}
	D_{n+1}\leq D_n,\quad\forall n\in \mathbb{N}_0.
\end{equation}
\end{oss}
Next, we show that the agents' opinions are bounded by a constant that depends of the initial data.
	\begin{lem}\label{L3onoff}
		For every $i=1,\dots,N,$ we have that \begin{equation}\label{boundsolonoff}
			\lvert x_{i}(t)\rvert\leq C_{0},\quad \forall t\geq-\tau,
		\end{equation}
		where $C_{0}$ is the constant defined in \eqref{M0}.	
	\end{lem}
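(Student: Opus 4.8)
The plan is to reduce the norm bound to the scalar-projection bound already established in Lemma \ref{L1}, applied to a well-chosen direction vector. The key preliminary observation is that for \emph{any} unit vector $v\in\RR^d$ the quantity $M_0^v=\max_{j=1,\dots,N}\max_{s\in[-\tau,0]}\langle x_j(s),v\rangle$ is controlled purely by the initial data: by the Cauchy--Schwarz inequality, $\langle x_j(s),v\rangle\le |x_j(s)|\,|v|\le C_0$ whenever $|v|=1$ and $s\in[-\tau,0]$, so that $M_0^v\le C_0$ uniformly over all unit vectors $v$, where $C_0$ is the constant in \eqref{M0}.

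With this in hand, I would fix $i\in\{1,\dots,N\}$ and $t\ge-\tau$. If $x_i(t)=0$, then \eqref{boundsolonoff} holds trivially since $C_0\ge 0$, so I may assume $x_i(t)\neq 0$ and introduce the unit vector
$$v:=\frac{x_i(t)}{\lvert x_i(t)\rvert}.$$
Applying the upper bound in \eqref{scalpronoff} of Lemma \ref{L1} with this particular $v$, together with the elementary identity $\lvert x_i(t)\rvert=\langle x_i(t),v\rangle$, yields
$$\lvert x_i(t)\rvert=\langle x_i(t),v\rangle\le M_0^v\le C_0,$$
the last step being exactly the uniform estimate recorded above. Since $i$ and $t\ge-\tau$ were arbitrary, this establishes \eqref{boundsolonoff}.

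There is essentially no obstacle here: all the analytic work has already been absorbed into Lemma \ref{L1}, whose invariance of the scalar projections between their initial extrema $m_0^v$ and $M_0^v$ is what makes the argument go through. The only point requiring a small amount of care is that the direction $v$ depends on the pair $(i,t)$; however, because the bound $M_0^v\le C_0$ holds simultaneously for every unit vector, this dependence is harmless and no uniformity is lost.
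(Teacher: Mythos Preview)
Your proof is correct and follows essentially the same approach as the paper: choose the unit vector $v=x_i(t)/|x_i(t)|$, apply the upper bound in \eqref{scalpronoff} from Lemma~\ref{L1}, and then use Cauchy--Schwarz to bound $M_0^v$ by $C_0$. The only cosmetic difference is that you record the bound $M_0^v\le C_0$ for all unit vectors as a preliminary observation, whereas the paper performs this estimate inline.
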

	\begin{proof}
	Given $i=1,\dots,N$ and $t\geq-\tau$, if $\lvert x_{i}(t)\rvert =0$, then trivially $C_{0}\geq \lvert x_{i}(t)\rvert $. On the contrary, if $\lvert x_{i}(t)\rvert >0$, we define $$v=\frac{x_{i}(t)}{\lvert x_{i}(t)\rvert},$$
	which is a unit vector.	Then, by applying \eqref{scalpronoff} and by using the Cauchy-Schwarz inequality, we get $$\lvert x_{i}(t)\rvert=\langle x_{i}(t),v\rangle\leq M_0^v=\max_{j=1,\dots,N}\max_{s\in [-\tau,0]}\langle x_{j}(s),v\rangle $$$$\leq\max_{j=1,\dots,N}\max_{s\in [-\tau,0]}\lvert x_{j}(s)\rvert\lvert v\rvert=\max_{j=1,\dots,N}\max_{s\in [-\tau,0]}\lvert x_{j}(s)\rvert=C_{0},$$
	and \eqref{boundsolonoff} is satisfied.
\end{proof}
	\begin{oss}\label{R1onoff}
		From the estimate \eqref{boundsolonoff}, since the influence function $\psi$ is continuous, we deduce that 
		\begin{equation}\label{stima_psionoff}
			\psi (x_i(t), x_j(t-\tau_{ij}(t)))\ge \psi_{0},
		\end{equation}
		for all $t\ge 0,$ for all $i,j=1,\dots, N,$ where $\psi_{0}$ is the positive constant in \eqref{psi0}.
	\end{oss}
	
	\subsection{Consensus estimate}
In order to prove the consensus result, we need the following crucial proposition, inspired by a previous argument in \cite{H3}.
	 
	\begin{prop}\label{lemma 3onoff}
		For all $v\in \mathbb{R}^d$, it holds
		\begin{equation} \label{Bonoff}
			m_{0}^v+\Gamma(\tilde{M}^v_{0}-m^v_{0}) \leq \langle x_i(t),v\rangle\leq M^v_{0}-\Gamma(M^v_{0}-\tilde{m}_{0}^v), 
		\end{equation}
		for all $t\in I_1$ and for all $i =1,\dots, N$, where $\Gamma$ is the positive constant defined as follows
		\begin{equation}\label{Gammaonoff}
			\Gamma:=e^{-K (\frac{1}{2}(\gamma^2+3\gamma)(T+\tau)+\tau)}\left(\frac{\psi_{0}\tilde{\alpha}}{N-1}\right)^{\gamma}.
		\end{equation}
	\end{prop}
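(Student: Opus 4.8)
The plan is to prove only the upper estimate $\langle x_i(t),v\rangle\le M_0^v-\Gamma(M_0^v-\tilde m_0^v)$ on $I_1$; the lower estimate then follows by applying this upper estimate to the vector $-v$, since replacing $v$ by $-v$ swaps $M_0^v\leftrightarrow -m_0^v$ and $\tilde m_0^v\leftrightarrow -\tilde M_0^v$, turning the upper bound into exactly $\langle x_i(t),v\rangle\ge m_0^v+\Gamma(\tilde M_0^v-m_0^v)$. So I fix $v$, pick an agent $i_0$ realizing $\langle x_{i_0}(0),v\rangle=\tilde m_0^v$, and set $A:=M_0^v-\tilde m_0^v\ge 0$. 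The underlying idea is that the ``low'' value of $i_0$ at time $0$ is transported to every other agent through the strongly connected digraph, pushing everyone's projection strictly below $M_0^v$ by the time we reach the left endpoint $\gamma(T+\tau)$ of $I_1$. I would prove this by induction on the graph distance to $i_0$, in $\gamma$ stages $k=0,1,\dots,\gamma$, where stage $k$ is valid from time $a_k:=k(T+\tau)$ onward and covers all agents $\ell$ with $\text{dist}(\ell,i_0)\le k$.

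For the base case $k=0$, Lemma \ref{L1onoff} gives $\langle x_j(t-\tau_{i_0j}(t)),v\rangle\le M_0^v$, so $y(t):=M_0^v-\langle x_{i_0}(t),v\rangle$ satisfies $y'(t)\ge -Ky(t)$, and with $y(0)=A$ we get $y(t)\ge Ae^{-Kt}$ for $t\ge 0$. For the inductive step, assume that every $\ell$ with $\text{dist}(\ell,i_0)\le k$ satisfies $M_0^v-\langle x_\ell(t),v\rangle\ge(\frac{\psi_0\tilde\alpha}{N-1})^k e^{-Kt}A$ for $t\ge a_k$. Take $j$ with $\text{dist}(j,i_0)=k+1$ and let $j'$ be the vertex following $j$ on a shortest path from $j$ to $i_0$, so that $\chi_{jj'}=1$ and $\text{dist}(j',i_0)=k$. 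Writing $w(t):=M_0^v-\langle x_j(t),v\rangle\ge 0$, I isolate the $j'$-term in $\dot x_j$, bound every other bracket by $w(t)$ together with $\psi\le K$, and bound the $j'$-term using $\psi\ge\psi_0$ (Remark \ref{R1onoff}) and $\langle x_{j'}(t-\tau_{jj'}(t))-x_j(t),v\rangle\le w(t)-\delta(t)$ with $\delta(t):=M_0^v-\langle x_{j'}(t-\tau_{jj'}(t)),v\rangle$. This produces the scalar differential inequality
$$w'(t)\ge -Kw(t)+\frac{\psi_0}{N-1}\,\alpha_{jj'}(t)\,\delta(t).$$
Solving with the integrating factor $e^{Kt}$ and discarding the nonnegative boundary term gives $w(t)\ge\frac{\psi_0}{N-1}e^{-Kt}\int_{a_k+\tau}^{t}e^{Ks}\alpha_{jj'}(s)\delta(s)\,ds$. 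For $s\ge a_k+\tau$ the delayed argument satisfies $s-\tau_{jj'}(s)\ge a_k$, so the stage-$k$ hypothesis applies to $j'$, and since the delay only enlarges the exponential one gets $e^{Ks}\delta(s)\ge(\frac{\psi_0\tilde\alpha}{N-1})^k A$. Feeding this in and invoking {\bf (PE)} on the window $[a_k+\tau,a_k+\tau+T]$, where $\int\alpha_{jj'}\ge\tilde\alpha$, upgrades the estimate to the stage-$(k+1)$ bound $w(t)\ge(\frac{\psi_0\tilde\alpha}{N-1})^{k+1}e^{-Kt}A$ for $t\ge a_{k+1}=a_k+(T+\tau)$.

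After $\gamma$ stages, since the depth of the digraph is $\gamma$ every agent $j$ has $\text{dist}(j,i_0)\le\gamma$, hence $M_0^v-\langle x_j(t),v\rangle\ge(\frac{\psi_0\tilde\alpha}{N-1})^\gamma e^{-Kt}A$ for $t\ge\gamma(T+\tau)$; here one uses $\frac{\psi_0\tilde\alpha}{N-1}\le 1$ (from $\psi_0\le K$ and $\tilde\alpha K\le 1$) so that agents at smaller distance are also covered. As $I_1=[\gamma(T+\tau),\gamma(T+\tau)+\tau]$, bounding $e^{-Kt}\ge e^{-K(\gamma(T+\tau)+\tau)}$ on $I_1$ yields the upper estimate with a constant no smaller than the stated $\Gamma$, which in particular proves the claimed inequality. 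The $\tau$-buffer built into both $a_k$ and the intervals $I_n$ is precisely what keeps each delayed argument inside the validity region of the previous stage.

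The step I expect to be the main obstacle is the bookkeeping across the $\gamma$ stages: one must simultaneously arrange the time windows so that every application of {\bf (PE)} sits inside the region where the previous stage's estimate is valid after a delay of size up to $\tau$, and track the compounding of the factors $\frac{\psi_0\tilde\alpha}{N-1}$ and of the exponentials so as to land on (at least) the constant $\Gamma$. A secondary point needing care is that the $\alpha_{ij}$ are only $\mathcal L^1$-measurable, so the differential inequalities hold only almost everywhere; the integrated, integrating-factor form used above is what makes the argument insensitive to this and lets {\bf (PE)} enter cleanly through the integral of $\alpha_{jj'}$.
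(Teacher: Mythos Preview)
Your argument is correct and follows the same overall strategy as the paper: pick the agent realizing $\tilde m_0^v$, propagate its ``low'' projection through the strongly connected digraph in $\gamma$ stages (one per $(T+\tau)$-window, with a $\tau$-buffer to absorb the delay), and use {\bf (PE)} once per stage to pick up a factor $\frac{\psi_0\tilde\alpha}{N-1}$. The reduction of the lower bound to the upper via $v\mapsto -v$ is also exactly what the paper does.

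Where you genuinely differ is in the bookkeeping of the exponentials. The paper freezes each intermediate estimate as a constant on the remaining interval; at step $k$ it uses Gronwall on $[k(T+\tau),\gamma(T+\tau)+\tau]$ and bounds the resulting exponential by $e^{-K(\text{length of that interval})}$, so each stage adds roughly $\gamma(T+\tau)$ to the exponent. This is why the paper lands on $e^{-K(\frac{1}{2}(\gamma^2+3\gamma)(T+\tau)+\tau)}$, quadratic in $\gamma$. You instead carry the time-dependent factor $e^{-Kt}$ through the inductive hypothesis; the key gain is that when you feed the stage-$k$ bound into the integral, $e^{Ks}\delta(s)\ge(\frac{\psi_0\tilde\alpha}{N-1})^kA$ holds \emph{without} any residual exponential, because the $e^{-K(s-\tau_{jj'}(s))}$ from the hypothesis and the $e^{Ks}$ from the integrating factor cancel (with the delay only helping). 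The net effect is a single factor $e^{-K(\gamma(T+\tau)+\tau)}$ at the very end, which is strictly larger than the paper's $\Gamma$ for every $\gamma\ge 1$ and hence implies the stated inequality. So your route is not only valid but sharper; the price is that one must check (as you do) that the inductive hypothesis also covers agents at distance $<k+1$, which follows from $\frac{\psi_0\tilde\alpha}{N-1}\le 1$.

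Two small points worth making explicit in a write-up: (i) the monotonicity in $k$ of the bound (via $\frac{\psi_0\tilde\alpha}{N-1}\le 1$) is needed at \emph{every} stage of the induction, not just at the end; (ii) your $j'$ may satisfy $\text{dist}(j',i_0)<k$ rather than $=k$, but since the hypothesis covers all distances $\le k$ this is harmless.
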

	\begin{oss}
		Let us note that, from {\bf (PE)}, $\Gamma\in (0,1)$ since $\tilde\alpha \psi_0\leq \tilde\alpha K\leq 1$.
	\end{oss}
	\begin{proof}
		Fix $v \in \mathbb{R}^d$. Let $L=1,\dots,N$ be such that $\langle x_L(0),v\rangle= \tilde{m}_{0}^v $. Note that from \eqref{scalpronoff}, $M^v_0\geq \tilde{m}^v_0$. Then, for a.e. $t\in [0,\gamma(T+\tau)+\tau]$, using \eqref{scalpronoff} we have
		$$\begin{array}{l}
			\vspace{0.3cm}\displaystyle{\frac{d}{dt}\langle x_L(t),v\rangle=\sum_{j:j\neq L}\chi_{Lj}\alpha_{Lj}(t)b_{Lj}(t)(\langle x_j(t-\tau_{Lj}(t)),v\rangle-\langle x_L(t),v\rangle)}\\
			\vspace{0.3cm}\displaystyle{\hspace{1.5cm}\leq \sum_{j:j\neq L}\chi_{Lj}\alpha_{Lj}(t)b_{Lj}(t)(M_0^v-\langle x_L(t),v\rangle)}\\
			\displaystyle{\hspace{1.5cm}\leq \frac{K}{N-1}\sum_{j:j\neq L}(M_0^v-\langle x_L(t),v\rangle)=K(M_0^v-\langle x_L(t),v\rangle).}
		\end{array}$$
		Thus, the Gronwall's inequality yields
		$$\begin{array}{l}
			\vspace{0.3cm}\displaystyle{\langle x_L(t),v\rangle\leq e^{-Kt}\langle x_L(0),v\rangle+M_0^v(1-e^{-Kt})}\\
			\vspace{0.3cm}\displaystyle{\hspace{1.5cm}=e^{-Kt}\tilde{m}_{0}^v+M_0^v(1-e^{-Kt})}\\
			\vspace{0.3cm}\displaystyle{\hspace{1.5cm}=M_0^v-e^{-Kt}(M_0^v-\tilde{m}_{0}^v)}\\
			\displaystyle{\hspace{1.5cm}\leq M_0^v-e^{-K(\gamma(T+\tau)+\tau)}(M_0^v-\tilde{m}_{0}^v).}
		\end{array}$$
	Hence,
		\begin{equation}\label{firstgron}
			\langle x_L(t),v\rangle\leq M_0^v-e^{-K(\gamma(T+\tau)+\tau)}(M_0^v-\tilde{m}_{0}^v),\quad \forall t\in [0,\gamma (T+\tau)+\tau].
		\end{equation}
		Now, let $i_1=1,\dots,N\setminus \{L\}$ be such that $\chi_{i_1L}=1$. Such an index $i_1$ exists since the digraph is strongly connected. Then, for a.e. $t \in [\tau,\gamma (T+\tau)+\tau]$, from \eqref{firstgron} we get
		$$\begin{array}{l}
			\vspace{0.3cm}\displaystyle{\frac{d}{d t}  \langle x_{i_1}(t),v\rangle =  \sum_{j \neq i_1, L} \chi_{i_1j}\alpha_{i_1j}(t) b_{i_1j}(t)( \langle x_{j}(t-\tau_{i_1j}(t)),v\rangle-\langle x_{i_1}(t),v\rangle) }\\
			\vspace{0.3cm}\displaystyle{\hspace{2cm}+ \alpha_{i_1L}(t)b_{i_1L}(t)(\langle x_{L}(t-\tau_{i_1L}(t)),v\rangle-\langle x_{i_1}(t),v\rangle)}\\
			\vspace{0.3cm}\displaystyle{\hspace{1cm}\leq \sum_{j \neq i_1 ,L} \alpha_{i_1j}(t)\chi_{i_1j} b_{i_1j}(t)( M_{0}^v-\langle x_{i_1}(t),v\rangle)}\\
			\vspace{0.3cm}\displaystyle{\hspace{2cm}+\alpha_{i_1L}(t)b_{i_1L}(t)\left(M_0^v-e^{-K(\gamma(T+\tau)+\tau)}(M_0^v-\tilde{m}_{0}^v)-\langle x_{i_1}(t),v\rangle\right)}\\
			\vspace{0.3cm}\displaystyle{\hspace{1cm}=( M_{0}^v-\langle x_{i_1}(t),v\rangle)\sum_{j \neq i_1 , L} \chi_{i_1j}\alpha_{i_1j}(t) b_{i_1j}(t)}\\
			\displaystyle{\hspace{2cm}+ \alpha_{i_1L}(t)b_{i_1L}(t)\left(M_0^v-e^{-K(\gamma(T+\tau)+\tau)}(M_0^v-\tilde{m}_{0}^v)-\langle x_{i_1}(t),v\rangle\right).}
		\end{array}$$
		Note that
		$$\sum_{j \neq i_1 ,L} \chi_{i_1j} \alpha_{i_1j}(t)b_{i_1j}(t)=\sum_{\substack{j \neq i_1 }} \chi_{i_1j}\alpha_{i_1j}(t) b_{i_1j}(t)-\alpha_{i_1L}(t)b_{i_1L}(t)$$$$\leq \frac{K}{N-1}\sum_{\substack{j \neq i_1 }} \chi_{i_1j}-\alpha_{i_1L}(t)b_{i_1L}(t)=\frac{KN_{i_1}}{N-1}-\alpha_{i_1L}(t)b_{i_1L}(t).$$
		Thus, it comes that
		$$\begin{array}{l}
			\vspace{0.3cm}\displaystyle{\frac{d}{d t}  \langle x_{i_1}(t),v\rangle \leq \frac{KN_{i_{1}}}{N-1}(M_{0}^v-\langle x_{i_1}(t),v\rangle) -\alpha_{i_1L}(t)b_{i_1L}(t)(M_{0}^v-\langle x_{i_1}(t),v\rangle) }\\
			\vspace{0.3cm}\displaystyle{\hspace{3cm}+ \alpha_{i_1L}(t)b_{i_1L}(t)\left(M_0^v-e^{-K(\gamma(T+\tau)+\tau)}(M_0^v-\tilde{m}_{0}^v)-\langle x_{i_1}(t),v\rangle\right)}\\
			\vspace{0.3cm}\displaystyle{\hspace{2cm}=\frac{KN_{i_{1}}}{N-1}(M_{0}^v-\langle x_{i_1}(t),v\rangle) -e^{-K(\gamma(T+\tau)+\tau)}(M_0^v-\tilde{m}_{0}^v)\alpha_{i_1L}(t)b_{i_1L}(t)}\\
			\vspace{0.3cm}\displaystyle{\hspace{2cm}\leq \frac{KN_{i_{1}}}{N-1}(M_{0}^v-\langle x_{i_1}(t),v\rangle) -e^{-K(\gamma(T+\tau)+\tau)}(M_0^v-\tilde{m}_{0}^v)\alpha_{i_1L}(t)\frac{\psi_{0}}{N-1}}\\
			\displaystyle{\hspace{2cm}=\frac{KN_{i_{1}}}{N-1} M_0^v-e^{-K(\gamma(T+\tau)+\tau)}(M_0^v-\tilde{m}_{0}^v)\alpha_{i_1L}(t)\frac{\psi_{0}}{N-1}-\frac{KN_{i_{1}}}{N-1}\langle x_{i_1}(t),v\rangle).}
		\end{array}$$
		Hence, the Gronwall's estimate yields
		$$ \begin{array}{l}
			\vspace{0.3cm}\displaystyle{\langle x_{i_1}(t),v\rangle\leq e^{-\frac{KN_{i_1}}{N-1}(t-\tau)} \langle x_{i_1}(\tau),v\rangle) +M^v_0(1-e^{-\frac{KN_{i_1}}{N-1}(t-\tau)})}\\
			\vspace{0.3cm}\displaystyle{\hspace{2cm}-e^{-K(\gamma(T+\tau)+\tau)}(M_0^v-\tilde{m}_{0}^v)\frac{\psi_0}{N-1}\int_{\tau}^{t}\alpha_{i_1L}(s)e^{-\frac{KN_{i_1}}{N-1}(t-s)}ds}\\
			\vspace{0.3cm}\displaystyle{\hspace{1cm}\leq e^{-\frac{KN_{i_1}}{N-1}(t-\tau)}M_0^v +M^v_0(1-e^{-\frac{KN_{i_1}}{N-1}(t-\tau)})}\\
			\vspace{0.3cm}\displaystyle{\hspace{1cm}-e^{-K(\gamma(T+\tau)+\tau)}(M_0^v-\tilde{m}_{0}^v)e^{-K\gamma(T+\tau)}\frac{\psi_0}{N-1}\int_{\tau}^{t}\alpha_{i_1L}(s)ds}\\
			\vspace{0.3cm}\displaystyle{\hspace{1cm}=M^v_0-e^{-K(2\gamma(T+\tau)+\tau)}(M_0^v-\tilde{m}_{0}^v)\frac{\psi_0}{N-1}\int_{\tau}^{t}\alpha_{i_1L}(s)ds,}
		\end{array}$$
		for all $t\in [\tau,\gamma(T+\tau)+\tau]$. In particular, for $t\in [T+\tau,\gamma(T+\tau)+\tau]$, we find
		\begin{equation}\label{i_1T}
			\langle x_{i_1}(t),v\rangle\leq M^v_0-e^{-K(2\gamma(T+\tau)+\tau)}(M_0^v-\tilde{m}_{0}^v)\frac{\psi_0}{N-1}\tilde{\alpha},
		\end{equation}
		where here we have used the fact that, from \eqref{PE}, 
		$$\int_{\tau}^{t}\alpha_{i_1L}(s)ds\geq \int_{\tau}^{T+\tau}\alpha_{i_1L}(s)ds\geq \tilde{\alpha}.$$
		Let us note that, if $\gamma=1$, estimate \eqref{i_1T} holds for each agent. If $\gamma>1$, let us consider an index $i_2 \in \{1, \dots, N \} \setminus \{i_1\} $ such that $\chi_{i_2i_1}=1$. Then, for a.e. $t \in [T+2\tau,\gamma(T+\tau)+\tau]$,  from \eqref{i_1T} it comes that
		$$\begin{array}{l}
			\vspace{0.3cm}\displaystyle{\frac{d}{d t}  \langle x_{i_2}(t),v\rangle =  \sum_{j \neq i_1, i_2} \chi_{i_2j}\alpha_{i_2j}(t) b_{i_2j}(t)( \langle x_{j}(t-\tau_{i_2j}(t)),v\rangle-\langle x_{i_2}(t),v\rangle) }\\
			\vspace{0.3cm}\displaystyle{\hspace{2cm}+ \alpha_{i_2i_1}(t)b_{i_2i_1}(t)(\langle x_{i_1}(t-\tau_{i_2i_1}(t)),v\rangle-\langle x_{i_2}(t),v\rangle)}\\
			\vspace{0.3cm}\displaystyle{\hspace{1cm}\leq ( M_{0}^v-\langle x_{i_2}(t),v\rangle)\sum_{j \neq i_1, i_1} \chi_{i_2j}\alpha_{i_2j}(t) b_{i_2j}(t)}\\
			\displaystyle{\hspace{2cm}+ \alpha_{i_2i_1}(t)b_{i_2i_1}(t)\left(M^v_0-e^{-K(2\gamma(T+\tau)+\tau)}(M_0^v-\tilde{m}_{0}^v)\frac{\psi_0}{N-1}\tilde{\alpha}-\langle x_{i_2}(t),v\rangle\right).}
		\end{array}$$
		Thus, arguing as above, 
		$$\begin{array}{l}
			\vspace{0.3cm}\displaystyle{\frac{d}{d t}  \langle x_{i_2}(t),v\rangle \leq \frac{KN_{i_{2}}}{N-1}(M_{0}^v-\langle x_{i_2}(t),v\rangle) -\alpha_{i_2i_1}(t)b_{i_2i_1}(t)(M_{0}^v-\langle x_{i_2}(t),v\rangle) }\\
			\vspace{0.3cm}\displaystyle{\hspace{2cm}+ \alpha_{i_2i_1}(t)b_{i_2i_1}(t)\left(M^v_0-e^{-K(2\gamma(T+\tau)+\tau)}(M_0^v-\tilde{m}_{0}^v)\frac{\psi_0}{N-1}\tilde{\alpha}-\langle x_{i_2}(t),v\rangle\right)}\\
			\vspace{0.3cm}\displaystyle{\hspace{1cm}=\frac{KN_{i_{2}}}{N-1}(M_{0}^v-\langle x_{i_1}(t),v\rangle) -\alpha_{i_2i_1}(t)b_{i_2i_1}(t)e^{-K(2\gamma(T+\tau)+\tau)}(M_0^v-\tilde{m}_{0}^v)\frac{\psi_0}{N-1}\tilde{\alpha}}\\
			\vspace{0.3cm}\displaystyle{\hspace{1cm}\leq \frac{KN_{i_{2}}}{N-1}(M_{0}^v-\langle x_{i_1}(t),v\rangle) -\alpha_{i_2i_1}(t)e^{-K(2\gamma(T+\tau)+\tau)}(M_0^v-\tilde{m}_{0}^v)\left(\frac{\psi_0}{N-1}\right)^2\tilde{\alpha}}\\
			\displaystyle{\hspace{1cm}=\frac{KN_{i_{2}}}{N-1}M_{0}^v-\alpha_{i_2i_1}(t)e^{-K(2\gamma(T+\tau)+\tau)}(M_0^v-\tilde{m}_{0}^v)\left(\frac{\psi_0}{N-1}\right)^2\tilde{\alpha}-\frac{KN_{i_{2}}}{N-1}\langle x_{i_2}(t),v\rangle).}
		\end{array}$$
		Again, using Gronwall's estimate it comes that
		$$ \begin{array}{l}
			\vspace{0.3cm}\displaystyle{\langle x_{i_2}(t),v\rangle\leq e^{-\frac{KN_{i_2}}{N-1}(t-T-2\tau)} \langle x_{i_2}(T+2\tau),v\rangle) +M^v_0(1-e^{-\frac{KN_{i_2}}{N-1}(t-T-2\tau)})}\\
			\vspace{0.3cm}\displaystyle{\hspace{2.5cm}-e^{-K(2\gamma(T+\tau)+\tau)}(M_0^v-\tilde{m}_{0}^v)\left(\frac{\psi_0}{N-1}\right)^2\tilde{\alpha}\int_{T+2\tau}^{t}\alpha_{i_2i_1}(s)e^{-\frac{KN_{i_2}}{N-1}(t-s)}ds}\\
			\displaystyle{\hspace{1.7cm}\leq M^v_0-e^{-K(3\gamma(T+\tau)-T)}(M_0^v-\tilde{m}_{0}^v)\left(\frac{\psi_0}{N-1}\right)^2\tilde{\alpha}\int_{T+2\tau}^{t}\alpha_{i_2i_1}(s)ds,}
		\end{array}$$
		for all $t\in [T+2\tau,\gamma (T+\tau)+\tau]$. In particular, for $t\in [2T+2\tau,\gamma (T+\tau)+\tau]$, the condition \eqref{PE} yields	
		\begin{equation}\label{i_2T}
			\langle x_{i_2}(t),v\rangle\leq M^v_0-e^{-K(3\gamma (T+\tau)-T)}(M_0^v-\tilde{m}_{0}^v)\left(\frac{\psi_0}{N-1}\right)^2\tilde{\alpha}^2.
		\end{equation}
		Finally, iterating the above procedure along the path $i_0,i_1,\dots,i_{r},$ $r\le\gamma,$ that starts from $i_0=L$  we find the following upper bound
		\begin{equation} \label{5.13onoff}
			\langle x_{i_k}(t),v\rangle \leq M^v_{0 }-e^{- K((k+1)\gamma(T+\tau) -\left(\sum_{l=0}^{k-1}l\right)(T+\tau)+\tau)} (M_0^v-\tilde{m}_{0}^v)\left(\frac{\psi_{0}\tilde{\alpha}}{N-1}\right)^{k} , 
		\end{equation}
		for all $1\leq k\leq r$ and for all $t\in [k(T+\tau),\gamma (T+\tau)+\tau]$. In particular, if the path has length $\gamma,$ for  $k=\gamma$, since $\sum_{l=0}^{\gamma-1}l=\frac{\gamma(\gamma-1)}{2}$, inequality \eqref{5.13onoff} reads as
		\begin{equation}\label{5.13gammaonoff}
			\langle x_{i_{\gamma}}(t),v\rangle \leq M^v_{0 }-e^{-K (\frac{1}{2}(\gamma^2+3\gamma)(T+\tau)+\tau)}(M_0^v-\tilde{m}_{0}^v) \left(\frac{\psi_{0}\tilde{\alpha}}{N-1}\right)^{\gamma},
		\end{equation}
		for all $t\in [\gamma(T+\tau),\gamma (T+\tau)+\tau]$.
		\\Let us note that \eqref{5.13gammaonoff} holds for every agent in the path starting from $i_0=L$ for $t\in [\gamma(T+\tau),\gamma (T+\tau)+\tau]$. Then, from the arbitrariness of the path and since the
digraph is strongly connected,  \eqref{5.13gammaonoff} holds for all the agents. 
		
Now, let $R=1,\dots,N$ be such that $\tilde{M}_0^v=\langle x_R(\gamma(T+\tau)-\tau),v\rangle$. Then, arguing as before, we get
		\begin{equation}\label{Rinequalityonoff}
			\langle x_R(t),v\rangle	\geq m^v_0(1+e^{-K(\gamma(T+\tau)+\tau)}(\tilde{M}_0^v-m_{0}^v)),\quad \forall t\in [0,\gamma(T+\tau)+\tau].
		\end{equation}
		Employing the same arguments used above, we can conclude that
		$$\langle x_{i}(t),v\rangle \geq m^v_{0 }+e^{-K(\frac{1}{2}(\gamma^2+3\gamma)(T+\tau)+\tau)}(\tilde{M}_0^v-m_{0}^v)\left(\frac{\psi_{0}\tilde{\alpha}}{N-1}\right)^{\gamma} ,$$
		for all $t\in [\gamma(T+\tau),\gamma(T+\tau)+\tau]$ and for all $i=1,\dots,N$. Finally, we can deduce that estimate \eqref{Bonoff} holds.
	\end{proof} 
	The following proposition generalizes the previous one in successive time intervals. Its proof is analogous to the previous one, so we omit it.
	\begin{prop}\label{lemma3'onoff}
		Let $v\in \mathbb{R}^d$. For any $n\in \mathbb{N}_0$, it holds
		\begin{equation} \label{B'onoff}
			m^v_{n}+\Gamma(\tilde{M}^v_{n}-m^v_{n}) \leq \langle x_i(t),v\rangle\leq M^v_{n}-\Gamma(M^v_{n}-\tilde{m}^v_n), 
		\end{equation}
		for all $t\in I_{n+1}$ and for all $i =1,\dots, N$, where $\Gamma$ is the positive constant in \eqref{Gammaonoff}.
	\end{prop}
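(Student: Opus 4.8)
The plan is to reproduce the argument of Proposition \ref{lemma 3onoff} verbatim, but with every time window translated by $t_n:=n(\gamma(T+\tau)+\tau)$ and with the basic a priori bounds now supplied by Lemma \ref{L1onoff} in place of Lemma \ref{L1}. Fix $v\in\RR^d$ and observe that $t_{n+1}=t_n+\gamma(T+\tau)+\tau$, so that $I_{n+1}=[t_n+\gamma(T+\tau),\,t_n+\gamma(T+\tau)+\tau]$ is precisely the translate by $t_n$ of the interval $[\gamma(T+\tau),\gamma(T+\tau)+\tau]$ on which \eqref{5.13gammaonoff} was obtained. Since Lemma \ref{L1onoff} gives $m_n^v\leq\langle x_i(t),v\rangle\leq M_n^v$ for all $t\geq t_n-\tau$ and all $i$, and since $\tilde m_n^v$ and $\tilde M_n^v$ are values attained at $t=t_n\geq t_n-\tau$, we have $M_n^v\geq\tilde m_n^v$ and $\tilde M_n^v\geq m_n^v$, which is the only positivity needed below.

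For the upper bound I would first choose an index $L$ with $\langle x_L(t_n),v\rangle=\tilde m_n^v$. By \eqref{taubounded} one has $t-\tau_{Lj}(t)\geq t-\tau\geq t_n-\tau$ for every $t\geq t_n$, so Lemma \ref{L1onoff} bounds all delayed scalar products in \eqref{onoff} above by $M_n^v$ on $[t_n,t_{n+1}]$. Inserting this into the differential inequality for $\langle x_L(t),v\rangle$ and integrating by Gronwall exactly as in \eqref{firstgron} yields
$$\langle x_L(t),v\rangle\leq M_n^v-e^{-K(\gamma(T+\tau)+\tau)}(M_n^v-\tilde m_n^v),\qquad \forall t\in[t_n,t_{n+1}].$$

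Next I would propagate this estimate along a path $i_0=L,i_1,\dots,i_\gamma$ in the strongly connected digraph, following the derivation of \eqref{i_1T}, \eqref{i_2T} and \eqref{5.13onoff}. At the $k$-th stage the Gronwall integration starts at the translated instant $t_n+k\tau+(k-1)T$, and the Persistence Excitation Condition \eqref{PE}, valid uniformly for all $t\geq0$, gives $\int\alpha_{i_ki_{k-1}}(s)\,ds\geq\tilde\alpha$ over the corresponding shifted length-$T$ window. This reproduces \eqref{5.13onoff} with $M_0^v,\tilde m_0^v$ replaced by $M_n^v,\tilde m_n^v$, valid on the translated intervals $[t_n+k(T+\tau),t_{n+1}]$. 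Taking $k=\gamma$ and letting the path and the extremal index $L$ vary, strong connectedness yields $\langle x_i(t),v\rangle\leq M_n^v-\Gamma(M_n^v-\tilde m_n^v)$ for all $i$ and all $t\in I_{n+1}$, with $\Gamma$ as in \eqref{Gammaonoff}. The lower bound follows by the symmetric argument starting from an index $R$ with $\langle x_R(t_n),v\rangle=\tilde M_n^v$, or equivalently by applying the upper estimate to $-v$, which gives the whole of \eqref{B'onoff}.

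The only point deserving attention — the content genuinely hidden behind the phrase \emph{arguing as above} — is the bookkeeping of the time windows: one must verify that every delayed argument $t-\tau_{ij}(t)$ met at the $k$-th stage still lies in the region $[t_n-\tau,+\infty)$ where Lemma \ref{L1onoff} is available, and that the successive integration endpoints $t_n+k\tau+(k-1)T$ are spaced so that a full length-$T$ window of \textbf{(PE)} fits before the propagation reaches $I_{n+1}$ at $k=\gamma$. Both facts are guaranteed by \eqref{taubounded} together with the identity $t_{n+1}=t_n+\gamma(T+\tau)+\tau$, exactly as in the case $n=0$; no analytic ingredient beyond Lemma \ref{L1onoff} and the uniform-in-time form of \eqref{PE} is required, which is why the estimate \eqref{B'onoff} carries over to every $n\in\mathbb{N}_0$ with the same constant $\Gamma$.
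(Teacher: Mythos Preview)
Your proposal is correct and follows exactly the approach the paper intends: the paper explicitly omits the proof of Proposition~\ref{lemma3'onoff}, stating only that it is analogous to that of Proposition~\ref{lemma 3onoff}, i.e., the same argument translated to initial time $t_n=n(\gamma(T+\tau)+\tau)$ with Lemma~\ref{L1onoff} supplying the a~priori bounds. Your bookkeeping of the shifted windows and the use of \textbf{(PE)} on the translated intervals is precisely what the authors have in mind.
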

	Now, we are able the consensus Theorem \ref{consgenonoff}.
	\begin{proof}[\textbf{Proof of Theorem \ref{consgenonoff}}]
		Fix $v\in \RR^d$. Let us define the quantities $$\mathcal{D}^v_n:=M^v_n-m^v_n,\quad \forall n\in \mathbb{N}_0,$$
		where $M_n^v$, $m_n^v$ are the constants introduced in Definition \ref{quantonoff}. Note that, for all $n\in \mathbb{N}_0$, we have $\mathcal{D}^v_n\geq 0$, being $M_n^v\geq m_n^v$.
		\\Let $\Gamma\in (0,1)$ be the constant in \eqref{Gammaonoff}. We claim that
		\begin{equation}\label{D_nonoff}
			\mathcal{D}^v_{n+1}\leq (1-\Gamma)\mathcal{D}_{n}^v,\quad \forall n\in \mathbb{N}_0.
		\end{equation}
		Indeed, fix $n\in \mathbb{N}_0$. Let $i,j=1,\dots,N$ and $s,t\in I_{n+1}$ be such that $\langle x_i(s),v\rangle=M^v_{n+1}$ and $\langle x_j(t),v\rangle=m^v_{n+1}$. Then, applying Lemma \ref{lemma3'onoff}, we can write 
		\begin{equation}\label{calD}
			\begin{array}{l}
				\vspace{0.3cm}\displaystyle{\hspace{1.5cm}\mathcal{D}^v_{n+1}=M^v_{n+1}-m^v_{n+1}=\langle x_i(s),v\rangle-\langle x_j(t),v\rangle}\\
				\displaystyle{\hspace{2cm}\leq M_{n}^v-m_{n}^v-\Gamma(M_{n}^v-\tilde{m}_n^v)-\Gamma(\tilde{M}_n^v-m_{n}^v).}
			\end{array}
		\end{equation}
			Now, we distinguish four cases.
		\\{\bf Case I)} Assume that $M_{n}^v-\tilde{m}_n^v=0$ and  $\tilde{M}_n^v-m_{n}^v=0$. Then, since from \eqref{scalprn}
		$$m_{n}^v\leq \tilde{m}_n^v\leq \tilde{M}_n^v=m_{n}^v,$$
		we get $$m_{n}^v=\tilde{m}_n^v=M_{n}^v.$$
		As a consequence, \eqref{calD} becomes 
		$$\mathcal{D}^v_{n+1}\leq 0=(1-\Gamma) \mathcal{D}^v_{n}.$$
		{\bf Case II)} Assume that $M_{n}^v-\tilde{m}_n^v=0$ and  $\tilde{M}_n^v-m_{n}^v>0$. Then, since from \eqref{scalprn} 
		$$\tilde{m}_n^v\leq \tilde{M}_{n}^v\leq M_{n}^v= \tilde{m}_{n}^v,$$
		we can write $$\tilde{M}_n^v=M_{n}^v.$$
		As a consequence, \eqref{calD} becomes 
		$$\mathcal{D}^v_{n+1}\leq M_{n}^v-m_{n}^v-\Gamma\tilde{M}_n^v+\Gamma m_{n}^v=(1-\Gamma )(M_{n}^v-m_{n}^v)=(1-\Gamma )\mathcal{D}^v_{n}.$$
		{\bf Case III)} Assume that $M_{n}^v-\tilde{m}_n^v>0$ and  $\tilde{M}_n^v-m_{n}^v=0$. Then, from \eqref{scalprn} we have 
		$$m_{n}^v\leq\tilde{m}_{n}^v\leq \tilde{M}_{n}^v=m_{n}^v ,$$
		from which $$\tilde{m}_n^v=m_{n}^v.$$
		As a consequence, \eqref{calD} becomes 
		$$\mathcal{D}^v_{n+1}\leq M_{n}^v-m_{n}^v-\Gamma M_{n}^v+\Gamma \tilde{m}_{n}^v=(1-\Gamma )(M_{n}^v-m_{n}^v)=(1-\Gamma )\mathcal{D}^v_{n}.$$
		{\bf Case IV)} Assume that $M_{n}^v-\tilde{m}_n^v>0$ and  $\tilde{M}_n^v-m_{n}^v>0$. In this case, using the fact that $\tilde{M}_n^v\geq \tilde{m}_n^v$, from \eqref{calD} we get 
		$$\mathcal{D}^v_{n+1}\leq (1-\Gamma) (M_{n}^v-m_{n}^v)-\Gamma \tilde{M}_n^v+\Gamma \tilde{m}_n^v\leq (1-\Gamma) (M_{n}^v-m_{n}^v)=(1-\Gamma)\mathcal{D}^v_{n}.$$
		Hence, \eqref{D_nonoff} is fulfilled.
		\\As a consequence, since the positive constant $\Gamma$ in \eqref{D_nonoff} does not depend of the choice of the vector $v$, we find the following estimate :
		\begin{equation}\label{deconoff}
			D_{n+1}\leq (1-\Gamma)D_{n},\quad \forall n\in \mathbb{N}_0.
		\end{equation}
	To see this, fix $n\in \mathbb{N}$. Let $i,j=1,\dots,N$ and $s,t\in I_{n+1}$ be such that
		$$D_{n+1}=\lvert x_i(s)-x_j(t)\rvert.$$
		Let us define the unit vector $$v=\frac{x_i(s)-x_j(t)}{\lvert x_i(s)-x_j(t)\rvert}.$$
		Then, using \eqref{scalprn} and \eqref{D_nonoff}, 
		$$\begin{array}{l}
			\vspace{0.3cm}\displaystyle{D_{n+1}=\langle x_i(s)-x_j(t),v\rangle=\langle x_i(s),v\rangle-\langle x_j(t),v\rangle}\\
			\vspace{0.3cm}\displaystyle{\hspace{1cm}\leq M^v_{n+1}-m^v_{n+1}=\mathcal{D}^v_{n+1}}\\
			\vspace{0.3cm}\displaystyle{\hspace{1cm}\leq (1-\Gamma)\mathcal{D}^v_{n}=(1-\Gamma)(M^v_{n}-m^v_{n})}\\
			\displaystyle{\hspace{1cm}\leq (1-\Gamma)\max_{k,l=1,\dots,N}\max_{r,w\in I_n}\lvert x_{k}(r)-x_{l}(w)\rvert=(1-\Gamma)D_{n}.}
		\end{array}$$
		Thus, \eqref{deconoff} holds true. 
		\\Now, from \eqref{deconoff} it comes that
		\begin{equation}\label{decayonoff}
			D_{n}\leq (1-\Gamma)^nD_0,\quad \forall n\in \mathbb{N}_0.
		\end{equation}
	 Let us note that \eqref{decayonoff} can be rewritten as
		\begin{equation}\label{decayonoffriscritta}
			D_{n}\leq e^{-nC(\gamma (T+\tau) +\tau)}D_0,\quad \forall n\in \mathbb{N}_0,
		\end{equation}
		where $$C=\frac{1}{\gamma (T+\tau)+\tau}\ln\left(\frac{1}{1-\Gamma}\right).$$
		\\Now, let $t\geq0$. Thus, $t\in [n(\gamma (T+\tau)+\tau),(n+1)(\gamma (T+\tau)+\tau)]$, for some $n\in \mathbb{N}_0$. Then, using \eqref{diamonoff2} and \eqref{decayonoffriscritta}, it comes that
		$$d(t)\leq D_{n}\leq e^{-nC(\gamma (T+\tau)+\tau) }D_0\leq e^{- C(t-\gamma(T+\tau)-\tau)}D_0,$$
		which concludes our proof.
	\end{proof}

\begin{oss}
		Let us note that, in the case $\chi_{ij}=1$, for all $i,j=1,\dots,N$, $i\neq j$, and $\alpha_{ij}(t)=\alpha(t),$ $\tau_{ij}(t)=\tau(t),$ for all $i,j=1,\dots,N$, for  suitable functions $\alpha(\cdot)$ and $\tau(\cdot)$ satisfying \eqref{PE} and \eqref{taubounded} respectively, then the constant $C$ in Theorem \ref{consgenonoff} can be chosen independent of the number of agents (see \cite{ContPi}). Moreover, the exponential convergence to consensus in \cite{ContPi} still holds if, instead of a weight function $\alpha(\cdot)$ and a time delay function $\tau(\cdot)$, we take weight functions and delay functions depending of the pair $i,j,$ under the   following symmetry assumption: $\alpha_{ij}(t)=\alpha_{ji}(t)$, for a.e. $t\geq 0$ and for all $i,j=1,\dots,N$, and $\tau_{ij}(t)=\tau_{ji}(t)$, for all $t\geq 0$ and for all $i,j=1,\dots,N$. So, also in the case of symmetric weight functions and universal interaction, the constant $C$ can be taken independent of the number of agents. However, our result here works in a more general setting than the one considered in \cite{ContPi}.
	\end{oss}
\section{The second-order alignment model}\label{secdelay}
	Consider a finite set of $N\in\N$ particles, with $N\geq 2 $. Let $x_{i}(t)\in \RR^d$ and $v_{i}(t)\in \RR^d$ denote the position and the velocity of the $i$-th particle at time $t$, respectively. We shall denote with $\lvert\cdot \rvert$ and $\langle \cdot,\cdot \rangle$ the usual norm and scalar product in $\RR^{d}$, respectively. The interactions between the elements of the system are described by the following Cucker-Smale type model with a variable time delay
	\begin{equation}\label{csp}
		\begin{cases}
			\frac{d}{dt}x_{i}(t)=v_{i}(t),\quad &t>0, \,\,\forall i=1,\dots,N,\\\frac{d}{dt}v_{i}(t)=\underset{j:j\neq i}{\sum}\chi_{ij}\alpha_{ij}(t)c_{ij}(t)(v_{j}(t-\tau_{ij}(t))-v_{i}(t)),\quad 	&t>0,\,\,\forall i=1,\dots,N,	
		\end{cases}
	\end{equation}
	where the time delay functions $\tau_{ij}:[0,+\infty)\rightarrow[0,+\infty)$ are as in \eqref{taubounded}, the terms $\chi_{ij}$ are defined as in \eqref{chiij} and the weight functions $\alpha_{ij}:[0,+\infty)\rightarrow [0,1]$ are $\mathcal{L}^1$-measurable and satisfy the Persistence Excitation Condition {\bf{(PE)}}.
	\\Here, the communication rates $c_{ij}$ of the form
	\begin{equation}\label{weightcs}
		c_{ij}(t):=\frac{1}{N-1}\tilde\psi( \lvert x_{i}(t)-x_{j}(t-\tau_{ij}(t))\rvert), \quad\forall t>0,\, \forall i,j=1,\dots,N,
	\end{equation}
	where $\tilde\psi:\RR\rightarrow \RR$ is a positive, bounded and continuous function and we denote by
	$$\tilde K:=\lVert \tilde \psi\rVert_{\infty}.$$ 
	The initial conditions
	\begin{equation}\label{incondcs}
		x_{i}(s)=x^{0}_{i}(s),\quad v_{i}(s)=v^{0}_{i}(s), \quad \forall s\in [-\tau,0],\,\forall i=1,\dots,N,
	\end{equation}
	are assumed to be continuous functions.
	\\We set
	\begin{equation}\label{C0}
		C^V_{0}:=\max_{i=1,\dots,N}\,\,\max_{s\in [-\tau, 0]}\lvert v_{i}(s)\rvert,
	\end{equation} 
	\begin{equation}\label{MX}
		M_0^{X}:=\max_{i=1,\dots,N}\,\,\max_{s,t\in [-\tau,0]}\lvert x_{i}(s)-x_i(t)\rvert.
	\end{equation}
	We define the space and velocity diameters as follows $$d_{X}(t):=\max_{i,j=1,\dots,N}\lvert x_{i}(t)-x_{j}(t)\rvert,\quad \forall t\geq -\tau$$
	$$d_{V}(t):=\max_{i,j=1,\dots,N}\lvert v_{i}(t)-v_{j}(t)\rvert,\quad \forall t\geq -\tau.$$
	\begin{defn} \label{unflock} We say that a solution $\{(x_{i},v_{i})\}_{i=1,\dots,N}$ to system \eqref{csp} exhibits \textit{asymptotic flocking} if it satisfies the two following conditions:
		\begin{enumerate}
			\item there exists a positive constant $d^{*}$ such that$$\sup_{t\geq-{\tau}}d_{X}(t)\leq d^{*};$$
			\item$\underset{t \to\infty}{\lim}d_{V}(t)=0.$
		\end{enumerate}
	\end{defn}
	Our main result is the following.
	\begin{thm} \label{uf}
		Assume \eqref{taubounded} and that the digraph $\mathcal{G}$ is strongly connected. Let $\tilde\psi:\RR\rightarrow\RR$ be a positive, bounded, continuous function that satisfies
		\begin{equation}\label{infint}
			\int_{0}^{+\infty}\left(\min_{r\in [0,t]}\tilde\psi(r)\right)^{\gamma}dt=+\infty,
		\end{equation}
		where $\gamma$ is the depth of the digraph. Assume that the weight functions $\alpha_{ij}:[0,+\infty)\rightarrow[0,1]$ are
${\mathcal L^1}$-measurable and satisfy {\bf (PE)}. Moreover, let $x^{0}_{i},v_{i}^{0}:[-{\tau},0]\rightarrow \RR^{d}$ be continuous functions, for any $i=1,\dots,N$. 
		Then, for every solution $\{(x_{i},v_{i})\}_{i=1,\dots,N}$ to \eqref{csp} with the initial conditions \eqref{incondcs}, there exists a positive constant $d^{*}$ such that \begin{equation}\label{posbound}
			\sup_{t\geq-{\tau}}d_{X}(t)\leq d^{*},
		\end{equation}
		and there exists a positive constant $\mu$ for which the following exponential decay estimate holds
		\begin{equation}\label{vel}
			d_{V}(t)\leq \left(\max_{i,j=1,\dots,N}\,\,\max_{r,s\in [-\tau,0]}\lvert v_{i}(r)-v_{j}(s)\rvert\right) e^{-\mu(t-\gamma(T+\tau)-\tau)},\quad \forall t\geq 0.
		\end{equation}
	\end{thm}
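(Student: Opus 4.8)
The plan is to view the second equation of \eqref{csp} as a first-order alignment system driven by the velocities and to re-run the machinery of Section \ref{timedelay}, the only genuinely new feature being that the communication rate \eqref{weightcs} is governed by the \emph{positions}, whose mutual distances may grow in time.

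\textbf{Step 1 (velocity bound and position growth).} The velocity equation in \eqref{csp} has exactly the convex-combination structure of \eqref{onoff}, with $v_i$ in place of $x_i$ and $c_{ij}$ in place of $b_{ij}$, and $\sum_{j\neq i}\chi_{ij}\alpha_{ij}(t)c_{ij}(t)\le\tilde K$. Hence the argument of Lemma \ref{L3onoff} (via the scalar-product estimate of Lemma \ref{L1}) applies verbatim to each $\langle v_i(t),v\rangle$ and yields $|v_i(t)|\le C_0^V$ for all $t\ge-\tau$, so that $d_V(t)\le 2C_0^V$. From the first equation $\frac{d}{dt}|x_i(t)-x_j(t)|\le|v_i(t)-v_j(t)|\le d_V(t)$ a.e., whence $d_X(t)\le M_0^X+\int_0^t d_V(s)\,ds$; moreover $|x_i(t)-x_j(t-\tau_{ij}(t))|\le d_X(t)+\tau C_0^V$, since $x_j(t-\tau_{ij}(t))=x_j(t)-\int_{t-\tau_{ij}(t)}^t v_j(s)\,ds$ and $|v_j|\le C_0^V$ on every window of length $\tau$.

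\textbf{Step 2 (velocity iteration with a position-dependent rate).} Set $\phi(s):=\min_{0\le r\le s}\tilde\psi(r)$, which is positive, continuous and nonincreasing. By Step 1, on any interval on which $d_X\le R$ one has $c_{ij}(t)\ge\frac{1}{N-1}\phi(R+\tau C_0^V)$, i.e. the constant $\psi_0$ of Section \ref{timedelay} is replaced by $\phi(R+\tau C_0^V)$. Reading the proofs of Proposition \ref{lemma 3onoff} and Proposition \ref{lemma3'onoff} with this substitution (and with $\tilde K$ in place of $K$) produces, for the velocity diameters $D_n^V$ defined as the $D_n$ of Definition \ref{Dn} but with $v_i$ in place of $x_i$, the contraction
\begin{equation*}
D_{n+1}^V\le(1-\Gamma_n)D_n^V,\qquad \Gamma_n:=e^{-\tilde K\left(\frac12(\gamma^2+3\gamma)(T+\tau)+\tau\right)}\left(\frac{\tilde\alpha}{N-1}\right)^{\gamma}\phi\!\left(R_n+\tau C_0^V\right)^{\gamma},
\end{equation*}
where $R_n$ is any upper bound for $d_X$ on $I_{n+1}$.

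\textbf{Step 3 (bootstrap: the positions stay bounded).} This is the heart of the proof and the step I expect to be most delicate. Writing $a:=\gamma(T+\tau)+\tau$, Step 1 gives $R_{n+1}\le R_n+a\,D_n^V$, hence $D_n^V\ge(R_{n+1}-R_n)/a$. Telescoping the contraction of Step 2 yields $\sum_n\Gamma_n D_n^V\le D_0^V$, and substituting the lower bound for $D_n^V$ turns this into a Riemann-type sum that controls $\int_{R_0}^{R_\infty}\phi(r+\tau C_0^V)^{\gamma}\,dr$ by a finite constant depending only on the data. Since \eqref{infint} asserts precisely that $\int_0^{+\infty}\phi(r)^{\gamma}\,dr=+\infty$, the range of integration cannot be infinite; therefore $R_\infty=:d^{*}<+\infty$, which is \eqref{posbound}. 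The care needed here is in passing from the discrete telescoped sum to the continuous integral: because $\phi$ is only nonincreasing, the index mismatch between $\phi(R_n)$ and $\phi(R_{n+1})$ must be absorbed, which is possible since the steps $R_{n+1}-R_n\le 2aC_0^V$ are uniformly bounded and the $D_n^V$ are nonincreasing.

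\textbf{Step 4 (exponential velocity flocking).} Once $d_X\le d^{*}$ for all $t$, the argument of $\tilde\psi$ in \eqref{weightcs} never exceeds $d^{*}+\tau C_0^V$, so $c_{ij}(t)\ge\frac{1}{N-1}\tilde\psi_0$ with $\tilde\psi_0:=\phi(d^{*}+\tau C_0^V)>0$ \emph{uniformly in time}. With this genuine positive lower bound the rates $\Gamma_n$ of Step 2 are all bounded below by the single constant $\Gamma^{*}:=e^{-\tilde K\left(\frac12(\gamma^2+3\gamma)(T+\tau)+\tau\right)}(\tilde\psi_0\tilde\alpha/(N-1))^{\gamma}\in(0,1)$, and the velocity problem becomes literally the first-order problem of Theorem \ref{consgenonoff}. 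Repeating its concluding computation with $D_n^V\le(1-\Gamma^{*})^nD_0^V$ gives $d_V(t)\le D_0^V\,e^{-\mu(t-a)}$ with $\mu=\frac1a\ln\frac{1}{1-\Gamma^{*}}$, which is exactly \eqref{vel}. Thus the logical order is essential: one must first secure \eqref{posbound} through the fat-tail condition \eqref{infint}, and only then does the uniform lower bound on $\tilde\psi$ upgrade the velocity consensus to the exponential rate.
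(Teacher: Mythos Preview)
Your outline is correct and follows the paper closely in Steps 1, 2 and 4, but Step 3 takes a genuinely different route. The paper does not telescope the contraction; instead it builds a piecewise-linear interpolant $\mathcal E(t)$ of the sequence $(D_n^V)_n$ and couples it with the position integral into the Lyapunov functional
\[
\mathcal W(t)=a\,\mathcal E(t)+C^*\int_0^{\tau C_0^V+M_0^X+\max_{s\in[-\tau,t+a]}d_X(s)}\Bigl(\min_{\sigma\in[0,r]}\tilde\psi(\sigma)\Bigr)^{\gamma}dr,\qquad a:=\gamma(T+\tau)+\tau,
\]
and checks that $\frac{d}{dt}\mathcal W\le0$ a.e., so that the position integral stays bounded and \eqref{infint} forces $\sup d_X<\infty$. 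Your telescoping argument is a discrete substitute for this Lyapunov computation: from $\sum_n\Gamma_nD_n^V\le D_0^V$ and $D_n^V\ge(R_{n+1}-R_n)/a$ you obtain a Riemann-type sum $\sum_n\phi(R_n+\tau C_0^V)^\gamma(R_{n+1}-R_n)$, and since $\phi$ is nonincreasing this is a left sum that dominates $\int_{R_0+\tau C_0^V}^{R_\infty+\tau C_0^V}\phi^\gamma$; the boundedness of the steps you invoke is what makes the comparison go through if the indexing happens to be off by one. Both approaches exploit the same mechanism, namely that the contraction rate $\Gamma_n$ is exactly $C^*\phi(\cdot)^\gamma$, the integrand in \eqref{infint}; your discrete route is more elementary and avoids the auxiliary constructions $\mathcal E,\mathcal W$, while the paper's continuous Lyapunov form makes the compensating cancellation transparent in one line.

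Two minor points to tighten. First, in Step 2 the bound $R_n$ must control $d_X$ on the whole window $[na-\tau,(n+1)a]$ (equivalently, be the running maximum up to time $(n+1)a$), not merely on $I_{n+1}$, since the Gronwall chain behind Proposition~\ref{lemma3'onoff} runs over that full interval; with this choice your relation $R_{n+1}\le R_n+aD_n^V$ is indeed correct. Second, the estimate $|x_i(t)-x_j(t-\tau_{ij}(t))|\le d_X(t)+\tau C_0^V$ in Step 1 requires the additional term $M_0^X$ when $t-\tau_{ij}(t)<0$, exactly as in the paper's lemma on position distances. Neither affects the structure of your argument.
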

\begin{oss}
	Let us note that, if the function $\tilde{\psi}$ is nonincreasing and the interaction is universal, i.e. $\gamma=1$, then the condition \eqref{infint} reduces to
	$$\int_{0}^{+\infty}\tilde\psi(t)dt=+\infty,$$
	which is the classical assumption to obtain the unconditional flocking (see e.g. \cite{Cartabia}). Since here we deal with an influence function not necessarily monotonic and the interaction is not universal,  we require the stronger assumption \eqref{infint} (cf. \cite{Cont} for the case of universal interaction).
\end{oss}
	\subsection{Preliminary lemmas}
	Let $\{x_{i},v_{i}\}_{i=1,\dots,N}$ be solution to \eqref{csp} under the initial conditions \eqref{incondcs}. We assume that the hypotheses of Theorem \ref{uf} are satisfied. The following lemmas hold. We omit their proofs since they can be proved using the same arguments employed in Section \ref{timedelay}.
	\begin{defn}\label{quantcs}
		Given a vector $v\in \mathbb{R}^d$, for all $n\in\mathbb{N}_0$ we define 
	
		$${r}_n^v:=\min_{j=1,\dots,N}\min_{s\in I_n}\,\langle v_{j}(s),v\rangle,$$
		$${R}_n^v:=\max_{j=1,\dots,N}\max_{s\in I_n}\,\langle v_{j}(s),v\rangle,$$
where, as in the previous section,
$$I_n=[n(\gamma (T+\tau)+\tau)-\tau,n(\gamma (T+\tau)+\tau)].$$

		Also, we define, for all $n\in\mathbb{N}_0$,
		$$\tilde{r}_n^v:=\min_{j=1,\dots,N}\langle v_{j}(n(\gamma (T+\tau)+\tau)),v\rangle,$$
		$$\tilde{R}_n^v:=\max_{j=1,\dots,N}\langle v_{j}(n(\gamma (T+\tau)+\tau)),v\rangle.$$
	\end{defn}
	\begin{lem}\label{L1cs}
		For each vector $v\in \RR^{d}$ and for any $n\in \mathbb{N}_0$, we have that 
		\begin{equation}\label{scalprcs}
			r_n^v\leq \langle v_{i}(t),v\rangle \leq R_n^v,
		\end{equation}for all  $t\geq n(\gamma (T+\tau)+\tau)-\tau$ and for any $i=1,\dots,N$.
	\end{lem}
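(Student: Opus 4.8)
The plan is to exploit the fact that the velocity equation in \eqref{csp} has exactly the same algebraic structure as the opinion equation \eqref{onoff} of the first-order model: the velocity $v_i$ plays the role of the opinion $x_i$, and the communication rate $c_{ij}(t)=\frac{1}{N-1}\tilde\psi(\lvert x_i(t)-x_j(t-\tau_{ij}(t))\rvert)$ plays the role of $b_{ij}$, being likewise of the form $\frac{1}{N-1}\tilde\psi(\cdot)$ with $0<\tilde\psi\leq\tilde K$. Thus Lemma \ref{L1cs} is the precise velocity analogue of Lemma \ref{L1onoff}, and I would prove it by transcribing the argument of Lemma \ref{L1}, replacing $x_i$ by $v_i$, $M_0^v$ by $R_0^v$, $m_0^v$ by $r_0^v$, and $K$ by $\tilde K$.

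Concretely, for the base case $n=0$ I would first note that \eqref{scalprcs} holds on $[-\tau,0]$ directly from the definitions of $R_0^v$ and $r_0^v$. To prove the upper bound for $t>0$, fix $v$ and set $K^\epsilon:=\{t>0:\max_i\langle v_i(s),v\rangle<R_0^v+\epsilon\ \forall s\in[0,t)\}$ with $S^\epsilon:=\sup K^\epsilon$; continuity gives $S^\epsilon>0$, and I would show $S^\epsilon=+\infty$ by contradiction. Projecting the velocity equation onto $v$ gives
$$\frac{d}{dt}\langle v_i(t),v\rangle=\frac{1}{N-1}\sum_{j\neq i}\chi_{ij}\alpha_{ij}(t)\tilde\psi(\lvert x_i(t)-x_j(t-\tau_{ij}(t))\rvert)\langle v_j(t-\tau_{ij}(t))-v_i(t),v\rangle,$$
and on $(0,S^\epsilon)$ the delayed argument satisfies $t-\tau_{ij}(t)\in(-\tau,S^\epsilon)$, so $\langle v_j(t-\tau_{ij}(t)),v\rangle<R_0^v+\epsilon$. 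Using $0\leq\chi_{ij},\alpha_{ij}\leq 1$ and $\tilde\psi\leq\tilde K$ yields $\frac{d}{dt}\langle v_i(t),v\rangle\leq\tilde K(R_0^v+\epsilon-\langle v_i(t),v\rangle)$, and Gronwall's inequality produces $\langle v_i(t),v\rangle\leq R_0^v+\epsilon-\epsilon e^{-\tilde K S^\epsilon}$ on $(0,S^\epsilon)$. Letting $t\to S^{\epsilon-}$ contradicts the forced limit $\lim_{t\to S^{\epsilon-}}\max_i\langle v_i(t),v\rangle=R_0^v+\epsilon$; hence $S^\epsilon=+\infty$, and letting $\epsilon\to 0$ gives $\langle v_i(t),v\rangle\leq R_0^v$ for all $t>0$. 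The lower bound then follows by applying this upper bound to $-v$, exactly as in Lemma \ref{L1}.

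For general $n$, the identical argument run from the initial time $n(\gamma(T+\tau)+\tau)-\tau$ over the interval $I_n$ yields \eqref{scalprcs}. I do not expect any genuine obstacle here: the only point worth flagging is that the bound on $c_{ij}$ uses solely $\lVert\tilde\psi\rVert_\infty=\tilde K$ and no positivity lower bound for $\tilde\psi$, so—in contrast with the role played by $\psi_0$ in Proposition \ref{lemma 3onoff}—no a priori control of the distance $\lvert x_i(t)-x_j(t-\tau_{ij}(t))\rvert$ is needed. This is exactly what makes the velocity bound immediate even though, for the Cucker-Smale model, the argument of $\tilde\psi$ is a spatial distance not yet known to be bounded.
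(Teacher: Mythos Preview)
Your proposal is correct and is exactly the approach the paper has in mind: the paper explicitly omits the proof of Lemma \ref{L1cs}, stating that it follows by the same arguments as in Section \ref{timedelay} (i.e., the proof of Lemma \ref{L1} and its generalization Lemma \ref{L1onoff}) with $v_i$, $r_n^v$, $R_n^v$, $\tilde K$ in place of $x_i$, $m_n^v$, $M_n^v$, $K$. Your observation that only the upper bound $\tilde\psi\leq\tilde K$ is needed here, and not any lower bound on $\tilde\psi$, is spot on and is precisely why this lemma goes through without first controlling the spatial diameter.
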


	\begin{defn}\label{Dncs}
		For all $n\in \mathbb{N}_0$, we define 
		$$F_n:=\max_{i,j =1,\dots, N}\max_{r,s\in I_n}\lvert v_i(r)-v_j(s)\rvert.$$
	\end{defn}
	\begin{oss}
		Let us note that $$F_0:=\max_{i,j =1,\dots, N}\max_{r,s\in I_0}\lvert v_i(r)-v_j(s)\rvert=\max_{i,j =1,\dots, N}\max_{r,s\in [-\tau,0]}\lvert v_i(r)-v_j(s)\rvert.$$
		Then, the exponential decay estimate in \eqref{vel} can be written as
		$$d_V(t)\leq e^{-\mu(t-\gamma(T+\tau)-\tau)}F_0,\quad \forall t\geq 0.$$
	\end{oss}
	\begin{lem}\label{lemmadiamcs}
		For each $n\in \mathbb{N}_0$, we have that 
		\begin{equation}\label{diamcs}
			\lvert v_i(s)-v_j(t)\rvert\leq F_n,
		\end{equation}	
		for all  $s,t\geq n(\gamma (T+\tau)+\tau)-\tau$ and for any $i,j=1,\dots,N$.
	\end{lem}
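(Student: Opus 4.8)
The plan is to reproduce the argument used for Lemma~\ref{lemmadiamonoff}, simply transporting it from the opinion variables $x_i$ to the velocity variables $v_i$ and replacing the position bound of Lemma~\ref{L1onoff} by its velocity counterpart Lemma~\ref{L1cs}. The whole point is that, once the one-sided scalar estimate $r_n^v\le\langle v_i(t),v\rangle\le R_n^v$ is available for every $t\ge n(\gamma(T+\tau)+\tau)-\tau$, the diameter bound follows from the elementary projection-onto-a-unit-vector device together with the Cauchy--Schwarz inequality, exactly as in the first-order case.

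Concretely, I would first fix $n\in\mathbb{N}_0$, indices $i,j$, and times $s,t\ge n(\gamma(T+\tau)+\tau)-\tau$. If $\lvert v_i(s)-v_j(t)\rvert=0$ the claim \eqref{diamcs} is immediate, so I may assume $\lvert v_i(s)-v_j(t)\rvert>0$ and set the unit vector $v=(v_i(s)-v_j(t))/\lvert v_i(s)-v_j(t)\rvert$. Writing the norm as a scalar product against $v$, splitting it, and estimating the two terms with Lemma~\ref{L1cs} gives
\begin{equation*}
\lvert v_i(s)-v_j(t)\rvert=\langle v_i(s),v\rangle-\langle v_j(t),v\rangle\le R_n^v-r_n^v\le\max_{k,l=1,\dots,N}\max_{r,\sigma\in I_n}\lvert v_k(r)-v_l(\sigma)\rvert=F_n,
\end{equation*}
where the last equality is Definition~\ref{Dncs}. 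Since $i,j,s,t$ were arbitrary subject to the stated constraints, this is precisely \eqref{diamcs}.

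I do not expect any genuine obstacle at this stage, because the substantive work has already been concentrated in Lemma~\ref{L1cs}: that is where the invariance of the velocity half-space bounds under the dynamics is established, via the same Gronwall-type barrier argument that proves Lemma~\ref{L1onoff}, now applied to the second equation of \eqref{csp} (the coefficients $c_{ij}$ are again bounded by $\tilde K/(N-1)$, and the delayed velocities inherit the bound over $[-\tau,\cdot)$). Granting Lemma~\ref{L1cs}, the only care needed here is the bookkeeping of the time threshold $n(\gamma(T+\tau)+\tau)-\tau$ and the observation that the constant $F_n$ does not depend on the particular unit vector $v$ chosen, so that the estimate transfers uniformly to the diameter. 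This is why the proof may legitimately be omitted.
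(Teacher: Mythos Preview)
Your proposal is correct and is exactly the argument the paper has in mind: the authors omit the proof because it is the velocity analogue of Lemma~\ref{lemmadiamonoff}, obtained by replacing $x_i$, $M_n^v$, $m_n^v$, $D_n$ with $v_i$, $R_n^v$, $r_n^v$, $F_n$ and invoking Lemma~\ref{L1cs} in place of Lemma~\ref{L1onoff}. Nothing is missing.
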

	\begin{oss}
		Let us note that \eqref{diamcs} yields
		\begin{equation}\label{diamcs2}
			d_V(t)\leq F_n,\quad \forall t\geq n(\gamma (T+\tau)+\tau)-\tau.
		\end{equation}
		Furthermore, from \eqref{diamcs} it follows that 
	\begin{equation}\label{decreasing}
		F_{n+1}\leq F_{n},\quad \forall n\in \mathbb{N}_0.
	\end{equation}
	\end{oss}
	Also, arguing as in Section \ref{timedelay}, we can find a bound on the velocities $\vert v_i(t)\vert,$ which is uniform with respect to $t$ and $i=1,\dots,N.$ 
	\begin{lem}\label{L3cs}
		For every $i=1,\dots,N,$ we have that \begin{equation}\label{boundsolcs}
			\lvert v_{i}(t)\rvert\leq C^V_{0},\quad \forall t\geq-\tau,
		\end{equation}
		where $C^V_{0}$ is the constant defined in \eqref{C0}.	
	\end{lem}
Now, we provide the following result in which an estimate on the position diameters is established. 
	\begin{lem}
		For every $i,j=1,\dots,N$, we get
		\begin{equation}\label{dist}
			\lvert x_{i}(t)-x_{j}(t-\tau_{ij}(t))\rvert\leq \tau C_{0}^{V}+M^{X}_{0}+d_{X}(t), \quad\forall t\geq0,
		\end{equation}
		where $C_{0}^{V}$ and $M^{X}_{0}$ are the positive constants in \eqref{C0} and \eqref{MX}, respectively.
	\end{lem}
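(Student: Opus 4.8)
The plan is to insert the common position $x_j(t)$ and control the two resulting pieces separately. By the triangle inequality,
$$\lvert x_i(t) - x_j(t-\tau_{ij}(t))\rvert \le \lvert x_i(t) - x_j(t)\rvert + \lvert x_j(t) - x_j(t-\tau_{ij}(t))\rvert,$$
and the first summand is bounded by $d_X(t)$ directly from the definition of the position diameter. It therefore remains to show that a single agent cannot move more than $\tau C_0^V + M_0^X$ over a backward window of length at most $\tau$, i.e. that $\lvert x_j(t) - x_j(t-\tau_{ij}(t))\rvert \le \tau C_0^V + M_0^X$ for all $t\ge 0$.

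For this displacement I would use the first equation of \eqref{csp}, namely $\tfrac{d}{dt}x_j = v_j$, together with the uniform velocity bound $\lvert v_j(s)\rvert \le C_0^V$ from Lemma \ref{L3cs}. When the delayed time is nonnegative, $t - \tau_{ij}(t) \ge 0$, integrating the velocity gives
$$\lvert x_j(t) - x_j(t-\tau_{ij}(t))\rvert = \left\lvert \int_{t-\tau_{ij}(t)}^{t} v_j(s)\, ds \right\rvert \le \tau_{ij}(t)\, C_0^V \le \tau C_0^V,$$
using \eqref{taubounded}, and the claimed bound follows at once since $M_0^X \ge 0$.

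The only delicate point, and the one I expect to be the crux, is the regime in which the delayed argument falls into the initial interval, $t - \tau_{ij}(t) < 0$, where the solution is not governed by the ODE across the origin, so the fundamental theorem of calculus cannot be applied over the whole window. Here \eqref{taubounded} forces $t \le \tau_{ij}(t) \le \tau$, so the sub-window $[0,t]$ has length at most $\tau$, while $t - \tau_{ij}(t) \in [-\tau, 0)$. I would then split at the origin,
$$\lvert x_j(t) - x_j(t-\tau_{ij}(t))\rvert \le \lvert x_j(t) - x_j(0)\rvert + \lvert x_j(0) - x_j(t-\tau_{ij}(t))\rvert,$$
estimate the first term by $\int_0^t \lvert v_j(s)\rvert\, ds \le t\, C_0^V \le \tau C_0^V$ exactly as before, and bound the second term by $M_0^X$, since both $0$ and $t-\tau_{ij}(t)$ lie in $[-\tau,0]$ and $M_0^X$ is precisely the maximal self-displacement of an agent over the initial interval \eqref{MX}. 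Combining the two regimes yields the bound on the single-agent displacement, and adding $d_X(t)$ establishes \eqref{dist}.
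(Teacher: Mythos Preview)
Your proof is correct and follows essentially the same approach as the paper: split via the triangle inequality at $x_j(t)$, bound the first piece by $d_X(t)$, and handle the self-displacement $\lvert x_j(t)-x_j(t-\tau_{ij}(t))\rvert$ by distinguishing whether the delayed argument is nonnegative (integrate the velocity and use Lemma~\ref{L3cs}) or falls into $[-\tau,0)$ (split at the origin and invoke $M_0^X$). The only cosmetic difference is that you explicitly note $M_0^X\ge 0$ to cover the first case, which the paper leaves implicit.
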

	\begin{proof}
		Given $i,j=1,\dots,N$ and $t\geq0$, we have
		\begin{equation}\label{split}
			\begin{split}
				\lvert x_{i}(t)-x_{j}(t-\tau_{ij}(t))\rvert&\leq \lvert x_{i}(t)-x_{j}(t)\rvert+\lvert x_{j}(t)-x_{j}(t-\tau_{ij}(t))\rvert\\&\leq d_{X}(t)+\lvert x_{j}(t)-x_{j}(t-\tau_{ij}(t))\rvert.
			\end{split}
		\end{equation}
		Now, we estimate $$\lvert x_{j}(t)-x_{j}(t-\tau_{ij}(t))\rvert.$$
		If $t-\tau_{ij}(t)>0$, from \eqref{taubounded} and \eqref{boundsolcs} we get
		$$\begin{array}{l}
			\vspace{0.3cm}\displaystyle{\lvert x_{j}(t)-x_{j}(t-\tau_{ij}(t))\rvert\leq \int_{t-\tau_{ij}(t)}^{t}\lvert v_{j}(s)\rvert ds}\\
			\displaystyle{\hspace{2cm}\leq C^{V}_{0}\tau_{ij}(t)\leq \tau C^{V}_{0}.}
		\end{array}$$
	On the other hand, if $t-\tau_{ij}(t)\leq 0$, then $t\leq \tau$ and 
	$$\begin{array}{l}
		\vspace{0.3cm}\displaystyle{\lvert x_{j}(t)-x_{j}(t-\tau_{ij}(t))\rvert\leq\lvert x_j(0)-x_j(t-\tau_{ij}(t))\rvert+ \int_{0}^{t}\lvert v_{j}(s)\rvert ds}\\
		\displaystyle{\hspace{2cm}\leq M^{X}_{0}+t C^{V}_{0}\leq M^{X}_0+\tau C^{V}_{0}.}
	\end{array}$$
		Therefore, in both cases,
		$$\lvert x_{j}(t)-x_{j}(t-\tau_{ij}(t))\rvert\leq M^{X}_0+\tau C^{V}_{0},$$
		from which \eqref{split} becomes
		$$	\lvert x_{i}(t)-x_{j}(t-\tau_{ij}(t))\rvert\leq M^{X}_0+\tau C^{V}_{0}+d_{X}(t).$$
	\end{proof}
	\subsection{The flocking estimate }
To prove the flocking result we need, as before, a crucial proposition. 
	First of all, we give the following definition.
	\begin{defn}
		We define
		$$\tilde\phi(t):=\min\left\{\tilde\psi(r):r\in \left[0,\tau C^{V}_{0}+M_{X}^{0}+\max_{s\in[-\tau,t] }d_{X}(s)\right]\right\},$$
		for all $t\geq -\tau$.
	\end{defn}
	\begin{oss}
		Let us note that from \eqref{dist}
		$$\tilde\psi(\lvert x_i(t)-x_j(t-\tau_{ij}(t))\rvert)\geq \tilde\phi(t),\quad\forall t\geq 0,\,\forall i,j=1,\dots,N.$$
		from which
		\begin{equation}\label{weightlowerbound}
			c_{ij}(t)\geq \frac{1}{N-1}\tilde\phi(t),\quad \forall t\geq 0,\,\forall i,j=1,\dots,N.
		\end{equation}
	\end{oss}
	
	\begin{prop}\label{lemma 3cs}
		For all $v\in \mathbb{R}^d$, it holds
		\begin{equation} \label{Bcs}
			r_{0}^v+\Gamma_1(\tilde{R}^v_{0}-r^v_{0}) \leq \langle v_i(t),v\rangle\leq R^v_{0}-\Gamma_1(R^v_{0}-\tilde{r}_{0}^v), 
		\end{equation}
		for all $t\in [\gamma(T+\tau),\gamma(T+\tau)+\tau]$ and for all $i =1,\dots, N$, where $\Gamma_1$ is the positive constant defined as follows
		\begin{equation}\label{Gammacs}
			\Gamma_1:=e^{-K (\frac{1}{2}(\gamma^2+3\gamma)(T+\tau)+\tau)}\left(\frac{\tilde\phi(\gamma(T+\tau)+\tau)\tilde{\alpha}}{N-1}\right)^\gamma.
		\end{equation}
	\end{prop}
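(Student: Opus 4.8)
The plan is to mirror the proof of Proposition~\ref{lemma 3onoff} almost verbatim, replacing the opinions $x_i$ by the velocities $v_i$ and noting that the velocity equation in \eqref{csp} has exactly the same algebraic structure as the first-order equation \eqref{onoff}, with the communication rate $c_{ij}$ in place of $b_{ij}$. The only genuine difference is that the interaction is no longer bounded below by a universal constant: the lower bound \eqref{weightlowerbound} reads $c_{ij}(t)\ge \tilde\phi(t)/(N-1)$ with $\tilde\phi$ \emph{time-dependent}. Since the defining interval of $\tilde\phi$ grows with $t$, the function $\tilde\phi$ is nonincreasing, so on the whole working interval $[0,\gamma(T+\tau)+\tau]$ one has $\tilde\phi(t)\ge \tilde\phi(\gamma(T+\tau)+\tau)$; this frozen value is exactly the constant entering $\Gamma_1$ in \eqref{Gammacs}. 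Here the sup bound $\tilde K=\lVert\tilde\psi\rVert_\infty$ plays the role that $K$ played in Section~\ref{timedelay}, giving $c_{ij}(t)\le \tilde K/(N-1)$, while Lemma~\ref{L1cs} supplies the uniform envelope $r_0^v\le\langle v_i(t),v\rangle\le R_0^v$.

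Fixing $v\in\RR^d$, I would pick $L$ with $\langle v_L(0),v\rangle=\tilde r_0^v$, bound the in-neighbours of $L$ by $R_0^v$ via Lemma~\ref{L1cs}, and apply Gronwall on $[0,\gamma(T+\tau)+\tau]$ to obtain $\langle v_L(t),v\rangle\le R_0^v-e^{-\tilde K(\gamma(T+\tau)+\tau)}(R_0^v-\tilde r_0^v)$. I would then propagate this sharpened bound along a path $i_0=L,i_1,\dots,i_\gamma$, which exists because $\mathcal G$ is strongly connected of depth $\gamma$, choosing at each step $i_k$ with $\chi_{i_k i_{k-1}}=1$. In the differential inequality for $\langle v_{i_k}(t),v\rangle$ I split the sum over in-neighbours into the distinguished term $j=i_{k-1}$, for which the sharper bound from the previous step is available, and the remaining terms, each controlled by $R_0^v$. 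The distinguished term yields a negative drift of size $\alpha_{i_k i_{k-1}}(t)\,c_{i_k i_{k-1}}(t)$, which I bound below using $c_{i_k i_{k-1}}(t)\ge \tilde\phi(\gamma(T+\tau)+\tau)/(N-1)$ and then, after integrating over a window of length $T$, using the Persistence Excitation estimate $\int \alpha_{i_k i_{k-1}}(s)\,ds\ge\tilde\alpha$ from \eqref{PE}.

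Iterating exactly as in the derivation of \eqref{5.13onoff}, and tracking that each step shifts the left endpoint first by $\tau$ (to absorb the delay) and then by $T$ (to open a full excitation window), yields for $1\le k\le\gamma$ a bound of the form $\langle v_{i_k}(t),v\rangle\le R_0^v-e^{-\tilde K(\cdots)}(R_0^v-\tilde r_0^v)\bigl(\tilde\phi(\gamma(T+\tau)+\tau)\tilde\alpha/(N-1)\bigr)^k$ on a suitable subinterval. The accumulated Gronwall decay factors sum to the exponent $\tfrac12(\gamma^2+3\gamma)(T+\tau)+\tau$, so at $k=\gamma$ the bound holds on $[\gamma(T+\tau),\gamma(T+\tau)+\tau]$; by the arbitrariness of the path and strong connectivity it then holds for every agent, which is the upper inequality in \eqref{Bcs}. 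The symmetric argument, starting from the index realizing $\tilde R_0^v$ and using the envelope value $r_0^v$, gives the lower inequality. I expect the only real subtlety beyond Section~\ref{timedelay} to be the nonconstant $\tilde\phi$: once monotonicity is used to freeze it at $\tilde\phi(\gamma(T+\tau)+\tau)$, the interval bookkeeping and the Gronwall iteration are identical to the first-order case, so the constant $\Gamma_1$ of \eqref{Gammacs} emerges with $\tilde\phi(\gamma(T+\tau)+\tau)$ in the role of $\psi_0$.
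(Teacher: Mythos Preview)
Your proposal is correct and follows essentially the same route as the paper: choose $L$ realizing $\tilde r_0^v$, apply Gronwall with the envelope $R_0^v$ from Lemma~\ref{L1cs}, then propagate the sharpened bound along a path in the strongly connected digraph, using at each step the lower bound \eqref{weightlowerbound} frozen via the monotonicity of $\tilde\phi$ at $\tilde\phi(\gamma(T+\tau)+\tau)$ and the Persistence Excitation condition \eqref{PE} to accumulate the factor $\bigl(\tilde\phi(\gamma(T+\tau)+\tau)\tilde\alpha/(N-1)\bigr)^\gamma$. The exponent bookkeeping and the symmetric argument for the lower inequality are identical to those of Proposition~\ref{lemma 3onoff}, exactly as you describe.
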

	\begin{oss}
		Let us note that, from {\bf (PE)}, $\Gamma_1\in (0,1)$ since $\tilde{\alpha} K\leq 1$.
	\end{oss}
	\begin{proof}
		Fix $v \in \mathbb{R}^d$. Let $L=1,\dots,N$ be such that $\langle v_L(0),v\rangle= \tilde{r}_{0}^v $. Note that from \eqref{scalprcs}, $R^v_0\geq \tilde{r}^v_0$. Then, for a.e. $t\in [0,\gamma(T+\tau)+\tau]$, from \eqref{scalprcs}
		$$\begin{array}{l}
			\vspace{0.3cm}\displaystyle{\frac{d}{dt}\langle v_L(t),v\rangle=\sum_{j:j\neq L}\chi_{Lj}\alpha_{Lj}(t)c_{Lj}(t)(\langle v_j(t-\tau_{Lj}(t)),v\rangle-\langle v_L(t),v\rangle)}\\
			\vspace{0.3cm}\displaystyle{\hspace{1.5cm}\leq \sum_{j:j\neq L}\chi_{Lj}\alpha_{Lj}(t)c_{Lj}(t)(R_0^v-\langle v_L(t),v\rangle)}\\
			\displaystyle{\hspace{1.5cm}\leq \frac{\tilde K}{N-1}\sum_{j:j\neq L}(R_0^v-\langle v_L(t),v\rangle)=\tilde K(R_0^v-\langle v_L(t),v\rangle).}
		\end{array}$$
		Thus, the Gronwall's inequality yields
		$$\begin{array}{l}
			\vspace{0.3cm}\displaystyle{\langle v_L(t),v\rangle\leq e^{-\tilde Kt}\langle v_L(0),v\rangle+R_0^v(1-e^{-\tilde Kt})}\\
			\vspace{0.3cm}\displaystyle{\hspace{1.5cm}= R_0^v-e^{-\tilde Kt}(R_0^v-\tilde{r}_{0}^v)}\\
			\displaystyle{\hspace{1.5cm}\leq R_0^v-e^{-\tilde K(\gamma(T+\tau)+\tau)}(R_0^v-\tilde{r}_{0}^v).}
		\end{array}$$
	Therefore, we have
		\begin{equation}\label{firstgroncs}
			\langle v_L(t),v\rangle\leq R_0^v-e^{-\tilde K(\gamma(T+\tau)+\tau)}(R_0^v-\tilde{r}_{0}^v),\quad \forall t\in [0,\gamma (T+\tau)+\tau].
		\end{equation}
		Now, let $i_1=1,\dots,N\setminus \{L\}$ be such that $\chi_{i_1L}=1$. Such an index $i_1$ exists since the digraph is strongly connected. Then, for a.e. $t \in [\tau,\gamma (T+\tau)+\tau]$, from \eqref{firstgroncs} we get
		$$\begin{array}{l}
			\vspace{0.3cm}\displaystyle{\frac{d}{d t}  \langle v_{i_1}(t),v\rangle =  \sum_{j \neq i_1 , L} \chi_{i_1j}\alpha_{i_1j}(t) c_{i_1j}(t)( \langle v_{j}(t-\tau_{i_1j}(t)),v\rangle-\langle v_{i_1}(t),v\rangle) }\\
			\vspace{0.3cm}\displaystyle{\hspace{2cm}+ \alpha_{i_1L}(t)c_{i_1L}(t)(\langle v_{L}(t-\tau_{i_1L}(t)),v\rangle-\langle v_{i_1}(t),v\rangle)}\\
			\vspace{0.3cm}\displaystyle{\hspace{1cm}\leq \sum_{j \neq i_1, L} \alpha_{i_1j}(t)\chi_{i_1j} c_{i_1j}(t)( R_{0}^v-\langle v_{i_1}(t),v\rangle)}\\
			\vspace{0.3cm}\displaystyle{\hspace{2cm}+\alpha_{i_1L}(t)c_{i_1L}(t)\left(R_0^v-e^{-\tilde K(\gamma(T+\tau)+\tau)}(R_0^v-\tilde{r}_{0}^v)-\langle v_{i_1}(t),v\rangle\right)}\\
			\vspace{0.3cm}\displaystyle{\hspace{1cm}=( R_{0}^v-\langle v_{i_1}(t),v\rangle)\sum_{j \neq i_1 ,L} \chi_{i_1j}\alpha_{i_1j}(t) c_{i_1j}(t)}\\
			\displaystyle{\hspace{2cm}+ \alpha_{i_1L}(t)c_{i_1L}(t)\left(R_0^v-e^{-\tilde K(\gamma(T+\tau)+\tau)}(R_0^v-\tilde{r}_{0}^v)-\langle v_{i_1}(t),v\rangle\right).}
		\end{array}$$
		Note that
		$$\sum_{j \neq i_1 , L} \chi_{i_1j} \alpha_{i_1j}(t)c_{i_1j}(t)=\sum_{\substack{j \neq i_1 }} \chi_{i_1j}\alpha_{i_1j}(t) c_{i_1j}(t)-\alpha_{i_1L}(t)c_{i_1L}(t)$$$$\leq \frac{\tilde K}{N-1}\sum_{j \neq i_1 , L}\chi_{i_1j}-\alpha_{i_1L}(t)c_{i_1L}(t)=\frac{\tilde KN_{i_1}}{N-1}-\alpha_{i_1L}(t)c_{i_1L}(t).$$
		Thus, from \eqref{weightlowerbound} it comes that
		$$\begin{array}{l}
			\vspace{0.3cm}\displaystyle{\frac{d}{d t}  \langle v_{i_1}(t),v\rangle \leq \frac{\tilde KN_{i_{1}}}{N-1}(R_{0}^v-\langle v_{i_1}(t),v\rangle) -\alpha_{i_1L}(t)c_{i_1L}(t)(R_{0}^v-\langle v_{i_1}(t),v\rangle) }\\
			\vspace{0.3cm}\displaystyle{\hspace{3cm}+ \alpha_{i_1L}(t)c_{i_1L}(t)\left(R_0^v-e^{-\tilde K(\gamma(T+\tau)+\tau)}(R_0^v-\tilde{r}_{0}^v)-\langle v_{i_1}(t),v\rangle\right)}\\
			\vspace{0.3cm}\displaystyle{\hspace{2cm}\leq \frac{\tilde KN_{i_{1}}}{N-1}(R_{0}^v-\langle v_{i_1}(t),v\rangle) -\alpha_{i_1L}(t)\frac{\tilde\phi(t)}{N-1}e^{-\tilde K(\gamma(T+\tau)+\tau)}(R_0^v-\tilde{r}_{0}^v)}\\
			\displaystyle{\hspace{2cm}=\frac{\tilde KN_{i_{1}}}{N-1} R_0^v-\alpha_{i_1L}(t)\frac{\tilde \phi(t)}{N-1}e^{-\tilde K(\gamma(T+\tau)+\tau)}(R_0^v-\tilde{r}_{0}^v)-\frac{\tilde  KN_{i_{1}}}{N-1}\langle v_{i_1}(t),v\rangle).}
		\end{array}$$
		Hence, the Gronwall's estimate yields
		$$ \begin{array}{l}
			\vspace{0.3cm}\displaystyle{\langle v_{i_1}(t),v\rangle\leq e^{-\frac{\tilde KN_{i_1}}{N-1}(t-\tau)} \langle v_{i_1}(\tau),v\rangle) +R^v_0(1-e^{-\frac{\tilde KN_{i_1}}{N-1}(t-\tau)})}\\
			\vspace{0.3cm}\displaystyle{\hspace{2cm}-e^{-\tilde K(\gamma(T+\tau)+\tau)}(R_0^v-\tilde{r}_{0}^v)\frac{1}{N-1}\int_{\tau}^{t}\tilde\phi(s)\alpha_{i_1L}(s)e^{-\frac{\tilde KN_{i_1}}{N-1}(t-s)}ds}\\
			\vspace{0.3cm}\displaystyle{\hspace{1cm}\leq e^{-\frac{\tilde KN_{i_1}}{N-1}(t-\tau)}R_0^v +R^v_0(1-e^{-\frac{\tilde KN_{i_1}}{N-1}(t-\tau)})}\\
			\vspace{0.3cm}\displaystyle{\hspace{2cm}-e^{-\tilde K(\gamma(T+\tau)+\tau)}(R_0^v-\tilde{r}_{0}^v)e^{-\tilde K\gamma(T+\tau)}\frac{1}{N-1}\int_{\tau}^{t}\tilde \phi(s)\alpha_{i_1L}(s)ds}\\
			\displaystyle{\hspace{1cm}=R^v_0-e^{-\tilde K(2\gamma(T+\tau)+\tau)}(R_0^v-\tilde{r}_{0}^v)\frac{1}{N-1}\int_{\tau}^{t}\tilde \phi(s)\alpha_{i_1L}(s)ds,}
		\end{array}$$
		for all $t\in [\tau,\gamma(T+\tau)+\tau]$. Note that, since $\tilde\phi$ is a nonincreasing function, 
		\begin{equation}\label{ineqphi}
			\tilde \phi(t)\geq \tilde \phi(\gamma(T+\tau)+\tau),\quad \forall t\in [0,\gamma(T+\tau)+\tau].
		\end{equation}
		Then, we can write
		$$ \begin{array}{l}
			\displaystyle{\langle v_{i_1}(t),v\rangle\leq R^v_0-e^{-\tilde K(2\gamma(T+\tau)+\tau)}(R_0^v-\tilde{r}_{0}^v)\frac{\tilde\phi(\gamma(T+\tau)+\tau)}{N-1}\int_{\tau}^{t}\alpha_{i_1L}(s)ds,}
		\end{array}$$
		for all $t\in [\tau,\gamma(T+\tau)+\tau]$. In particular, for $t\in [T+\tau,\gamma(T+\tau)+\tau]$, we find
		\begin{equation}\label{i_1Tcs}
			\langle v_{i_1}(t),v\rangle\leq R^v_0-e^{-\tilde K(2\gamma(T+\tau)+\tau)}(R_0^v-\tilde{r}_{0}^v)\frac{\tilde\phi(\gamma(T+\tau)+\tau)}{N-1}\tilde{\alpha},
		\end{equation}
		where here we have used the fact that \eqref{PE} implies the following inequality 
		$$\int_{\tau}^{t}\alpha_{i_1L}(s)ds\geq \int_{\tau}^{T+\tau}\alpha_{i_1L}(s)ds\geq \tilde{\alpha}.$$
		Now, if $\gamma=1$, \eqref{i_1Tcs} holds true for each agent. On the other hand, if $\gamma>1$, let us consider an index $i_2 \in \{1, \dots, N \} \setminus \{i_1\} $ such that $\chi_{i_2i_1}=1$. Then, for a.e. $t \in [T+2\tau,\gamma(T+\tau)+\tau]$, from \eqref{i_1Tcs} it comes that
		$$\begin{array}{l}
			\vspace{0.3cm}\displaystyle{\frac{d}{d t}  \langle v_{i_2}(t),v\rangle =  \sum_{j \neq i_1 , i_2} \chi_{i_2j}\alpha_{i_2j}(t) c_{i_2j}(t)( \langle v_{j}(t-\tau_{i_2j}(t)),v\rangle-\langle v_{i_2}(t),v\rangle) }\\
			\vspace{0.3cm}\displaystyle{\hspace{2cm}+ \alpha_{i_2i_1}(t)c_{i_2i_1}(t)(\langle v_{i_1}(t-\tau_{i_2i_1}(t)),v\rangle-\langle v_{i_2}(t),v\rangle)}\\
			\vspace{0.3cm}\displaystyle{\hspace{1cm}\leq ( R_{0}^v-\langle v_{i_2}(t),v\rangle)\sum_{j \neq i_1, i_2} \chi_{i_2j}\alpha_{i_2j}(t) c_{i_2j}(t)}\\
			\displaystyle{\hspace{2cm}+ \alpha_{i_2i_1}(t)c_{i_2i_1}(t)\left(R^v_0-e^{-\tilde K(2\gamma(T+\tau)+\tau)}(R_0^v-\tilde{r}_{0}^v)\frac{\tilde\phi(\gamma(T+\tau)+\tau)}{N-1}\tilde{\alpha}-\langle v_{i_2}(t),v\rangle\right).}
		\end{array}$$
		Hence, arguing as above we obtain 
		$$\begin{array}{l}
			\vspace{0.3cm}\displaystyle{\frac{d}{d t}  \langle v_{i_2}(t),v\rangle \leq \frac{\tilde KN_{i_{2}}}{N-1}(R_{0}^v-\langle x_{i_2}(t),v\rangle) -\alpha_{i_2i_1}(t)c_{i_2i_1}(t)(R_{0}^v-\langle v_{i_2}(t),v\rangle) }\\
			\vspace{0.3cm}\displaystyle{\hspace{2cm}+ \alpha_{i_2i_1}(t)c_{i_2i_1}(t)\left(R^v_0-e^{-\tilde K(2\gamma(T+\tau)+\tau)}(R_0^v-\tilde{r}_{0}^v)\frac{\tilde \phi(\gamma(T+\tau)+\tau)}{N-1}\tilde{\alpha}-\langle x_{i_2}(t),v\rangle\right)}\\
			\displaystyle{\hspace{1cm}\leq \frac{\tilde KN_{i_{2}}}{N-1}(R_{0}^v-\langle v_{i_2}(t),v\rangle) -\alpha_{i_2i_1}(t)e^{-\tilde K(2\gamma(T+\tau)+\tau)}(R_0^v-\tilde{r}_{0}^v)\frac{\tilde \phi(\gamma(T+\tau)+\tau)}{(N-1)^2}\tilde\phi(t)\tilde{\alpha}.}
		\end{array}$$
		Again, using Gronwall's estimate it comes that
		$$ \begin{array}{l}
			\vspace{0.3cm}\displaystyle{\langle v_{i_2}(t),v\rangle\leq e^{-\frac{\tilde KN_{i_2}}{N-1}(t-T-2\tau)} \langle v_{i_2}(T+2\tau),v\rangle) +R^v_0(1-e^{-\frac{\tilde KN_{i_2}}{N-1}(t-T-2\tau)})}\\
			\vspace{0.3cm}\displaystyle{\hspace{1cm}-e^{-\tilde K(2\gamma(T+\tau)+\tau)}(R_0^v-\tilde{r}_{0}^v)\frac{\tilde \phi(\gamma(T+\tau)+\tau)}{(N-1)^2}\tilde{\alpha}\int_{T+2\tau}^{t}\tilde \phi(s)\alpha_{i_2i_1}(s)e^{-\frac{\tilde KN_{i_2}}{N-1}(t-s)}ds}\\
			\displaystyle{\hspace{0.5cm}\leq R^v_0-e^{-\tilde K(3\gamma (T+\tau)-T)}(R_0^v-\tilde{r}_{0}^v)\frac{\tilde \phi(\gamma(T+\tau)+\tau)}{(N-1)^2}\tilde{\alpha}\int_{T+2\tau}^{t}\tilde \phi(s)\alpha_{i_2i_1}(s)ds,}
		\end{array}$$
		for all $t\in [T+2\tau,\gamma (T+\tau)+\tau]$. In particular, for $t\in [2T+2\tau,\gamma (T+\tau)+\tau]$, the condition \eqref{PE} and the inequality \eqref{ineqphi} imply that	
		\begin{equation}\label{i_2Tcs}
			\langle v_{i_2}(t),v\rangle\leq R^v_0-e^{-\tilde K(3\gamma (T+\tau)-T)}(R_0^v-\tilde{r}_{0}^v)\left(\frac{\tilde \phi(\gamma(T+\tau)+\tau)}{N-1}\right)^2\tilde{\alpha}^2.
		\end{equation}
		Finally, iterating the above procedure along the path $i_0,i_1,\dots,i_{r},$ with $r\le\gamma,$ starting from $i_0=L$  we find the following upper bound
		\begin{equation} \label{5.13cs}
			\langle v_{i_k}(t),v\rangle \leq R^v_{0 }-e^{- K((k+1)\gamma(T+\tau) -(T+\tau)\left(\sum_{l=0}^{k-1}l\right)+\tau)} (R_0^v-\tilde{r}_{0}^v)\left(\frac{\tilde\phi(\gamma(T+\tau)+\tau)\tilde{\alpha}}{N-1}\right)^{k} , 
		\end{equation}
		for all $1\leq k\leq r$ and for all $t\in [k(T+\tau),\gamma (T+\tau)+\tau]$. In particular, if the path has length $\gamma$, for $k=\gamma$, since $\sum_{l=0}^{\gamma-1}l=\frac{\gamma(\gamma-1)}{2}$, inequality \eqref{5.13cs} reads as
		\begin{equation}\label{5.13gammacs}
			\langle v_{i_{\gamma}}(t),v\rangle \leq R^v_{0 }-e^{-K (\frac{1}{2}(\gamma^2+3\gamma)(T+\tau)+\tau)}(R_0^v-\tilde{r}_{0}^v) \left(\frac{\tilde\phi(\gamma(T+\tau)+\tau)\tilde{\alpha}}{N-1}\right)^{\gamma},
		\end{equation}
		for all $t\in [\gamma(T+\tau),\gamma (T+\tau)+\tau]$.
		Arguing as in Proposition \ref{lemma 3onoff}, we can say that \eqref{5.13gammacs} holds for every $i=1,\dots,N$.
		\\Now, let $R=1,\dots,N$ be such that $\tilde{R}_0^v=\langle v_R(0),v\rangle$. Then, arguing as before, we get
		\begin{equation}\label{Rinequalitycs}
			\langle v_R(t),v\rangle	\geq r^v_0+e^{-K(\gamma(T+\tau)+\tau)}(\tilde{R}_0^v-r_{0}^v),\quad \forall t\in [0,\gamma(T+\tau)+\tau].
		\end{equation}
		Employing the same arguments used above, we can conclude that
		$$\langle v_{i}(t),v\rangle \geq r^v_{0 }+e^{-K(\frac{1}{2}(\gamma^2+3\gamma)(T+\tau)+\tau)}(\tilde{R}_0^v-r_{0}^v)) \left(\frac{\tilde\phi(\gamma(T+\tau)+\tau)\tilde{\alpha}}{N-1}\right)^{\gamma} ,$$
		for all $t\in [\gamma(T+\tau),\gamma(T+\tau)+\tau]$ and for all $i=1,\dots,N$. Finally, we can deduce that estimate \eqref{Bcs} holds.
	\end{proof} 
	The following proposition extends the previous one in successive time intervals. We omit its proof since it is analogous to the previous one.
	\begin{prop}\label{lemma3'cs}
		Let $v\in \mathbb{R}^d$. For any $n\in \mathbb{N}$, it holds
		\begin{equation} \label{B'cs}
			r^v_{n}+\Gamma_{n+1}(\tilde{R}^v_{n}-r^v_{n}) \leq \langle v_i(t),v\rangle\leq R^v_{n}-\Gamma_{n+1}(R^v_{n}-\tilde{r}^v_n), 
		\end{equation}
		for all $t\in I_{n+1}$ and for all $i =1,\dots, N$, where $\Gamma_{n+1}$ is the positive constant defined as
		\begin{equation}\label{Gammancs}
			\Gamma_{n+1}:=e^{-K (\frac{1}{2}(\gamma^2+3\gamma)(T+\tau)+\tau)}\left(\frac{\tilde\phi((n+1)(\gamma(T+\tau)+\tau))\tilde{\alpha}}{N-1}\right)^\gamma.
		\end{equation}
	\end{prop}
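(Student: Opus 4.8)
The plan is to repeat, essentially verbatim, the argument of Proposition \ref{lemma 3cs}, but with the time origin shifted from $0$ to $t_n:=n(\gamma(T+\tau)+\tau)$. The role played by \eqref{scalprcs} in the proof of Proposition \ref{lemma 3cs} is now taken over by Lemma \ref{L1cs}, which guarantees $r_n^v\leq\langle v_i(t),v\rangle\leq R_n^v$ for every $t\geq t_n-\tau$ and every $i=1,\dots,N$; in particular this controls the delayed arguments, since $t\geq t_n$ forces $t-\tau_{ij}(t)\geq t_n-\tau$, so that the projections of the delayed velocities still lie between $r_n^v$ and $R_n^v$.

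First I would fix $v\in\RR^d$ and choose $L$ with $\langle v_L(t_n),v\rangle=\tilde{r}_n^v$. Running the same Gronwall estimate as in \eqref{firstgroncs} on the interval $[t_n,t_n+\gamma(T+\tau)+\tau]$ yields
$$\langle v_L(t),v\rangle\leq R_n^v-e^{-\tilde K(\gamma(T+\tau)+\tau)}(R_n^v-\tilde{r}_n^v),\quad\forall t\in[t_n,t_n+\gamma(T+\tau)+\tau].$$
I would then propagate this bound along a path $i_0=L,i_1,\dots,i_r$ with $r\leq\gamma$ in the strongly connected digraph, exactly as in the derivation of \eqref{5.13cs}, applying Gronwall and \eqref{PE} at each vertex.

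The only point requiring care is the lower bound on the communication rates. By \eqref{weightlowerbound} we have $c_{ij}(t)\geq\frac{1}{N-1}\tilde\phi(t)$, and since $\tilde\phi$ is nonincreasing, on the whole working interval $[t_n,(n+1)(\gamma(T+\tau)+\tau)]$ one has, as in \eqref{ineqphi},
$$\tilde\phi(t)\geq\tilde\phi((n+1)(\gamma(T+\tau)+\tau)).$$
This is exactly why $\Gamma_{n+1}$ carries the value of $\tilde\phi$ at the right endpoint $(n+1)(\gamma(T+\tau)+\tau)$ of $I_{n+1}$, which is the smallest value of $\tilde\phi$ on the range under consideration. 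Combining this with \eqref{PE}, each of the $\gamma$ steps of the path contributes a factor $\frac{\tilde\alpha\,\tilde\phi((n+1)(\gamma(T+\tau)+\tau))}{N-1}$, so that the terminal agent satisfies
$$\langle v_{i_\gamma}(t),v\rangle\leq R_n^v-\Gamma_{n+1}(R_n^v-\tilde{r}_n^v),\quad\forall t\in I_{n+1}.$$

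By the arbitrariness of the path and strong connectivity, this upper bound holds for every agent on $I_{n+1}$; the symmetric lower bound follows from the dual argument, choosing $R$ with $\langle v_R(t_n),v\rangle=\tilde{R}_n^v$ and reasoning with $-v$ in place of $v$. Together these give \eqref{B'cs}. Since the construction is a faithful time-translation of Proposition \ref{lemma 3cs}, I expect the main obstacle to be purely bookkeeping: checking that every subinterval appearing in the successive Gronwall steps and every delayed argument remains inside $[t_n-\tau,+\infty)$, where Lemma \ref{L1cs} applies, and that the monotonicity of $\tilde\phi$ is invoked at the correct right endpoint so that the constant $\Gamma_{n+1}$ comes out as in \eqref{Gammancs}. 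No new analytic difficulty arises beyond those already handled in Proposition \ref{lemma 3cs}.
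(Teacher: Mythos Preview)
Your proposal is correct and follows exactly the approach the paper intends: the paper explicitly omits the proof of Proposition~\ref{lemma3'cs} as ``analogous to the previous one,'' and your argument is precisely the time-translation of Proposition~\ref{lemma 3cs} to the shifted origin $t_n=n(\gamma(T+\tau)+\tau)$, using Lemma~\ref{L1cs} to control the delayed terms and the monotonicity of $\tilde\phi$ to extract the constant $\tilde\phi((n+1)(\gamma(T+\tau)+\tau))$ appearing in $\Gamma_{n+1}$. The bookkeeping points you flag (delayed arguments staying in $[t_n-\tau,\infty)$, right-endpoint evaluation of $\tilde\phi$) are exactly the ones that need checking, and they go through as you indicate.
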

	\begin{oss}
		Let us note that from \eqref{B'cs} it comes that
		\begin{equation}\label{claim1}
			R_{n+1}^v-r_{n+1}^v\leq (1-\Gamma_{n+1})(R_{n}^v-r_{n}^v),\quad\forall n\in \mathbb{N}_0.
		\end{equation}
		where $\Gamma_{n+1}\in (0,1)$ is the constant in \eqref{Gammancs}.
		\\Indeed, given $n\in \mathbb{N}_0$, let $i,j=1,\dots,N$ and $s,t\in I_{n+1}$ be such that $\langle v_i(s),v\rangle=R^v_{n+1}$ and $\langle v_j(t),v\rangle=r^v_{n+1}$. Then, applying Lemma \ref{lemma3'cs}, we can write 
		\begin{equation}\label{calDpcs}
			\begin{array}{l}
				\vspace{0.3cm}\displaystyle{\hspace{1.5cm}R^v_{n+1}-r^v_{n+1}=\langle v_i(s),v\rangle-\langle v_j(t),v\rangle}\\
				\displaystyle{\hspace{3cm}\leq R_{n}^v-r_{n}^v-\Gamma_{n+1}(R_{n}^v-\tilde{r}_n^v)-\Gamma_{n+1}(\tilde{R}_n^v-r_{n}^v).}
			\end{array}
		\end{equation}
		Then, arguing as in the proof of Theorem \ref{consgenonoff}, we get that estimate \eqref{claim1} holds true.
		\\Also, setting $C^*:=e^{-K (\frac{1}{2}(\gamma^2+3\gamma)(T+\tau)+\tau)}\left(\frac{\tilde{\alpha}}{N-1}\right)^\gamma,$
		it holds that
		\begin{equation}\label{gammaneq}
			\Gamma_{n+1}=C^*(\tilde\phi((n+1)(\gamma(T+\tau)+\tau)))^\gamma,\quad \forall n\in\mathbb{N}_0.
		\end{equation}
	As a consequence, \eqref{claim1} can be written as
	\begin{equation}\label{claim1eq}
		R_{n+1}^v-r_{n+1}^v\leq (1-C^*(\tilde\phi((n+1)(\gamma(T+\tau)+\tau)))^\gamma)(R_{n}^v-r_{n}^v),\quad\forall n\in \mathbb{N}_0.
	\end{equation}
In particular, from \eqref{claim1} and \eqref{claim1eq}, arguing as in Theorem \ref{consgenonoff}, it comes that
\begin{equation}\label{F_n}
	F_{n+1}\leq (1-\Gamma_{n+1})F_n,\quad \forall n\in \mathbb{N}_0,
\end{equation}
or, equivalently,
\begin{equation}\label{F_neq}
	F_{n+1}\leq (1-C^*(\tilde\phi((n+1)(\gamma(T+\tau)+\tau)))^\gamma)F_n,\quad \forall n\in \mathbb{N}_0,
\end{equation}
	\end{oss}
Now, we are able to prove Theorem \ref{uf}.
	\begin{proof}[Proof of Theorem \ref{uf}]
		Let $\{(x_{i},v_{i})\}_{i=1,\dots,N}$  be solution to \eqref{csp} under the initial conditions \eqref{incondcs}. Let us define
		$$\tilde{\Gamma}_{n+1}=\frac{\Gamma_{n+1}}{\gamma(T+\tau)+\tau}, \quad\forall n\in \mathbb{N}_0.$$
		Then, for all $n\in\mathbb{N}_0$ it holds $\tilde{\Gamma}_{n+1}<\Gamma_{n+1}$.
		\\Let us introduce the function $\mathcal{E}:[-\tau,+\infty)\rightarrow [0,+\infty),$ 
		$$\mathcal{E}(t):=\begin{cases}
			F_{0}, \hspace{4.5cm}t\in [-\tau,\gamma(T+\tau)+\tau],\\\mathcal{E}(n(\gamma(T+\tau)+\tau))\left(1-\tilde\Gamma_{n+1}(t-n(\gamma(T+\tau)+\tau))\right), \\\vspace{0.2cm}\hspace{5cm}t\in (n(\gamma(T+\tau)+\tau),(n+1)(\gamma(T+\tau)+\tau)],\,n\geq 1.
		\end{cases}$$
		By definition, $\mathcal{E}$ is continuous, positive and nonincreasing. Moreover, we claim that \begin{equation}\label{boundIn}
			F_{n}\leq \mathcal{E}(t),\quad \forall t\in [-\tau,n(\gamma(T+\tau)+\tau)],\,\forall n\in\mathbb{N}_0.
		\end{equation} 
We prove this by induction. For $n= 1$, from \eqref{decreasing} we can immediately say that $$F_{1}\leq F_{0}=\mathcal{E}(t),\quad \forall  t\in [-\tau,\gamma(T+\tau)+\tau].$$
		Now, assume that \eqref{boundIn} holds for some $n\geq 1$. We have to show that \eqref{boundIn} is true also for $n+1$. From the induction hypothesis and by using again \eqref{decreasing}, we have that $$F_{n+1}\leq F_{n}\leq \mathcal{E}(t),$$
		for all $t\in [-\tau,n(\gamma(T+\tau)+\tau)]$. It lasts to prove that $F_{n+1}\leq \mathcal{E}(t)$, for all $t\in (n(\gamma(T+\tau)+\tau), (n+1)(\gamma(T+\tau)+\tau)]$. From \eqref{F_n}, it comes that
		$$\mathcal{E}(t)\geq \mathcal{E}((n+1)(\gamma(T+\tau)+\tau))= \mathcal{E}(n(\gamma(T+\tau)+\tau))(1-\tilde\Gamma_{n+1}(\gamma(T+\tau)+\tau))= (1-\Gamma_{n+1})F_n\geq F_{n+1},$$
	for all $t\in (n(\gamma(T+\tau)+\tau), (n+1)(\gamma(T+\tau)+\tau)]$, where in the above inequalities we have used the fact that $\mathcal{E}$ is nonincreasing. Hence, \eqref{boundIn} is proven.
	\\Now, for almost all time (see \cite{Cont} for further details)
		\begin{equation}\label{derdiamX}
			\frac{d}{dt}\max_{s\in [-\bar{\tau},t]}d_{X}(s)\leq \left\lvert \frac{d}{dt}d_{X}(t)\right\rvert\leq  d_{V}(t).
		\end{equation}
		Next, let us define the function $\mathcal{W}:[-\tau,+\infty)\rightarrow [0,+\infty)$, 
$$
\mathcal{W}(t):=(\gamma(T+\tau)+\tau)\mathcal{E}(t)+C^*\int_{0}^{\tau C^{V}_{0}+M^{X}_{0}+\underset{s\in [-\tau,t+\gamma(T+\tau)+\tau]}{\max}d_{X}(s)}\left(\min_{\sigma\in [0,r]}\tilde\psi(\sigma)\right)^{\gamma}\,dr,
$$
		for all $t\geq -\tau$. By construction, $\mathcal{W}$ is continuous. Also, for each $n\geq 1$ and for a.e. $t\in(n(\gamma(T+\tau)+\tau),(n+1)(\gamma(T+\tau)+\tau) $, from \eqref{diamcs2}, \eqref{boundIn} and \eqref{derdiamX} it follows that $$\begin{array}{l}
			\vspace{0.3cm}\displaystyle{\frac{d}{dt}\mathcal{W}(t)=(\gamma(T+\tau)+\tau)\frac{d}{dt}\mathcal{E}(t)+C^*(\tilde\phi(t+\gamma(T+\tau)+\tau))^{\gamma}\,\frac{d}{dt}\underset{s\in [-\tau,t+(\gamma(T+\tau)+\tau)]}{\max}d_{X}(s)}\\
			\vspace{0.3cm}\displaystyle{\hspace{0.5cm}\leq-\mathcal{E}(n\gamma(T+\tau)+\tau)C^*(\tilde{\phi}((n+1)(\gamma(T+\tau)+\tau)))^\gamma}\\
\vspace{0.3cm}\hspace{3,5 cm} \displaystyle{
+C^*(\tilde{\phi}(t+\gamma(T+\tau)+\tau))^{\gamma}d_V(t+(\gamma(T+\tau)+\tau))}\\
			\displaystyle{\hspace{0.5cm}\leq C^*F_n(-(\tilde{\phi}((n+1)(\gamma(T+\tau)+\tau)))^{\gamma} +(\tilde{\phi}((n+1)(\gamma(T+\tau)+\tau))^{\gamma}))=0.}
		\end{array}$$
		Then, \begin{equation}\label{negder}
			\frac{d}{dt}\mathcal{W}(t)\leq0,\quad \text{a.e. }t>\gamma(T+\tau)+\tau,
		\end{equation}
		which implies \begin{equation}\label{2tau1}
			\mathcal{W}(t)\leq \mathcal{W}(\gamma(T+\tau)+\tau),\quad \forall t\geq \gamma(T+\tau)+\tau.
		\end{equation}
		Now, by definition of $\mathcal{W}$, being $\mathcal{E}$ a nonnegative function, we have
		$$C^*\int_{0}^{\tau C^{V}_{0}+M^{X}_{0}+\underset{s\in [-\tau,t+\gamma(T+\tau)+\tau]}{\max}d_{X}(s)}\left(\min_{\sigma\in [0,r]}\tilde\psi(\sigma)\right)^{\gamma}\,dr\leq\mathcal{W}(\gamma(T+\tau)+\tau),
		$$
		for all $t\geq\gamma(T+\tau)+\tau$. Letting $t\to \infty$ in the above inequality, we can conclude that \begin{equation}\label{lim2}
			C^*\int_{0}^{\tau C^{V}_{0}+M^{X}_{0}+\underset{s\in [-\tau,+\infty)}{\sup}d_{X}(s)}\left(\min_{\sigma\in [0,r]}\tilde\psi(\sigma)\right)^{\gamma}\,dr\leq\mathcal{W}(\gamma(T+\tau)+\tau)).
		\end{equation} 
		Finally, since the function $\tilde\psi$ satisfies property \eqref{infint}, from \eqref{lim2}, we can conclude that there exists a positive constant $d^{*}$ such that \begin{equation}\label{firstcond}
			\tau C^{V}_{0}+M^{X}_{0}+\underset{s\in [-\tau,+\infty)}{\sup}d_{X}(s)\leq d^{*}.
		\end{equation}
		Now, let us define
		$$\hat\phi:=\min_{r\in[0,d^{*}]}\tilde\psi(r).$$
		Note that $\tilde\phi^{*}>0$. Also, \eqref{firstcond} yields 
		\begin{equation}\label{lowerboundphi}
			\hat\phi\leq \tilde\phi(t), \quad \forall t\geq -\tau.
		\end{equation}
		Then, from \eqref{F_neq} 
		and  \eqref{lowerboundphi} we have 
		\begin{equation}\label{decunif}
			F_{n+1}\leq (1-C^* \hat\phi^\gamma)F_{n},\quad\forall n\in\mathbb{N}_0.
		\end{equation}
		Thus, thanks to an induction argument, we can write
		$$F_{n}\leq (1-C^* \hat\phi^\gamma)^nF_0,\quad \forall n\in \mathbb{N}_0.$$
		Note that the above inequality can be rewritten as
		\begin{equation}\label{finalmente}
			F_{n}\leq e^{-n\mu(\gamma(T+\tau)+\tau)}F_0,\quad \forall n\in\mathbb{N}_0,
		\end{equation}
		where $$\mu=\frac{1}{\gamma(T+\tau)+\tau}\ln\left(\frac{1}{1- C^* \hat\phi^\gamma}\right).$$
		Finally, let $t\geq 0$. Then, $t\in [n(\gamma(T+\tau)+\tau),(n+1)(\gamma(T+\tau)+\tau)]$, for some $n\in \mathbb{N}_0$. Then, using \eqref{diamcs2} and \eqref{finalmente}
		$$d_{V}(t)\leq F_{n}\leq e^{-n\mu(\gamma(T+\tau)+\tau)}F_0 \leq e^{-\mu(t-\gamma(T+\tau)-\tau)}F_0,$$
		which concludes our proof.
	\end{proof}

	\bigskip
	\noindent {\bf Acknowledgements.} {\small The authors are members of Gruppo Nazionale per l’Analisi Matematica,
la Probabilità e le loro Applicazioni (GNAMPA) of the Istituto Nazionale di
Alta Matematica (INdAM). E. Continelli and C. Pi\-gnot\-ti are partially
supported by PRIN 2022  (2022238YY5) {\it Optimal control problems: analysis,
approximation and applications}, and by INdAM GNAMPA Project {\it Modelli alle derivate parziali per interazioni multiagente non 
simmetriche}(CUP E53C23001670001).  C. Pignotti is also partially supported by
PRIN-PNRR 2022 (P20225SP98) {\it Some mathematical approaches to climate change and its impacts}.}


\begin{thebibliography}{10}
		\bibitem{AnconaRossi}
		F. Ancona, M. Bentaibi and F. Rossi, 
		\newblock {Consensus under Persistence Excitation,}
		\newblock{\em Preprint 2024},
		\newblock arXiv:2403.07549.
		
		
		
		
		
		\bibitem{Bellomo}
		N. Bellomo, M. A. Herrero and A. Tosin,
		\newblock {On the dynamics of social conflict: Looking for
			the Black Swan,}
		\newblock {\em Kinet. Relat. Models}, 6, (2013), 459--479.
		
		\bibitem{Bonnet}
		B. Bonnet and E. Flayac,
		\newblock Consensus and flocking under communication failures for a class of Cucker-Smale systems,
		\newblock {\em Systems Control Lett.}, 152, (2021), 10pp. 
		
		
		\bibitem{Borzi}
		A. Borz\`i and S. Wongkaew,
		\newblock Modeling and control through leadership of a refined flocking system.
		\newblock {\em Math. Models Methods Appl. Sci.}, 25, (2015), 255--282.
		
		
		
		\bibitem{Bullo}
		F.~Bullo, J. Cort\'es and S. Mart\'inez,
		\newblock {\em Distributed control of robotic networks: a mathematical approach to motion coordination algorithms.}
		\newblock Princeton series in applied mathematics. Princeton University Press, Princeton, 2009.
		
		\bibitem{Cama}
		S. Camazine, J. L. Deneubourg, N.R. Franks, J. Sneyd,
		G. Theraulaz and E. Bonabeau,
		\newblock{\em
			Self-Organization in Biological Systems}, Princeton University Press, Princeton, NJ, 2001.
		
		
		
		
		\bibitem{ChoiCicoPi}
	{ C. Carty, Y.-P. Choi, C. Cicolani  and C. Pignotti},
	\newblock{Asymptotic synchronization of Kuramoto oscillators with time delay and non-universal interaction}, \newblock{\em Preprint 2024}, 	arXiv:2406.01703.
		
		
		
		\bibitem{CF}
		F. Ceragioli and P. Frasca,
		\newblock{Continuous and discontinuous opinion dynamics with bounded confidence},
		\newblock{\em Nonlinear Anal. Real World Appl.,} 13, (2012), 1239--1251.
		
	
		\bibitem{CH}
		{ Y.-P. Choi and J. Haskovec},
		\newblock{Cucker-Smale model with normalized communication weights and time delay},
		\newblock {\em Kinet. Relat. Models}, 10, (2017), 1011--1033.
		
		\bibitem{CL} { Y.-P. Choi and Z. Li},
		\newblock{Emergent behavior of Cucker-Smale flocking particles with heterogeneous time delays},
		\newblock {\em Appl. Math. Lett.}, 86, (2018), 49--56.
		
		\bibitem{CPP}
		Y.-P. Choi, A. Paolucci and C. Pignotti,
		\newblock {Consensus of the Hegselmann-Krause opinion formation
			model with time delay},
		\newblock {\em Math. Methods Appl. Sci.}, 44, (2021), 4560--4579.
		
		
		\bibitem{CP}
		Y.-P. Choi and C. Pignotti,
		\newblock {Emergent behavior of Cucker-Smale model with normalized weights and distributed time delays},
		\newblock{\em Netw. Heterog. Media},  14, (2019), 789--804.
		
		
		\bibitem{Cont} 
		E. Continelli, \newblock{Asymptotic flocking for the Cucker-Smale model with time variable time delays}, \newblock{\em Acta Appl. Math.}, 188, (2023), 15, 23 pp.
		
		\bibitem{ContPign}
		E. Continelli and C. Pignotti, \newblock{Consensus for Hegselmann-Krause type models with time variable time delays},
		\newblock {\em Math. Methods Appl. Sci.}, 
		46, (2023), 18916--18934.
		
		\bibitem{ContPi}
		E. Continelli and C. Pignotti, \newblock{Convergence to consensus results for Hegselmann-Krause type models with attractive-lacking interaction}, \newblock{\em Math. Control Relat. Fields}, published online, \newblock{Doi: 10.3934/mcrf.2024029}.
		
		\bibitem{CS1}
		F. Cucker  and S. Smale,
		\newblock Emergent behaviour in flocks,
		\newblock {\em IEEE Transactions on Automatic Control}, 52, (2007), 852--862.
		
		
		\bibitem{DH}
		J.-G. Dong, S.-Y. Ha, K.
		Doheon and K. Jeongho,
		\newblock{Time-delay effect on the flocking in an ensemble of thermomechanical
			Cucker-Smale particles},
		\newblock{\em J. Differential Equations,} 266, (2019), 2373--2407.
		
\bibitem{Ha1}
S.-Y. Ha and J.-G. Liu,
\newblock{A simple proof of the Cucker-Smale flocking dynamics and mean-field limit},
\newblock {\em Commun. Math. Sci.}, 7, (2009), 297--325.



\bibitem{Ha2}
 S.-Y. Ha and E. Tadmor, 
\newblock{From particle to kinetic and hydrodynamic descriptions of 
flocking},
\newblock{\em Kinet. Relat. Models}, 1, (2008), 415--435.

		
		
		
		\bibitem{H}
		J.
		Haskovec,
		\newblock{A simple proof of asymptotic consensus in the Hegselmann-Krause and
			Cucker-Smale models with normalization and delay},
		\newblock{\em
			SIAM J. Appl. Dyn. Syst.}, 20, (2021),  130--148.
		
		
		\bibitem{H3}
		J. Haskovec,
		\newblock{Direct proof of unconditional asymptotic consensus in the Hegselmann-Krause model with transmission-type delay},
		\newblock{\em Bull. Lond. Math. Soc.}, 53, (2021), pp. 1312--1323.
		
		
		\bibitem{HM}
		J. Haskovec and I. Markou,
		\newblock{Asymptotic flocking in the Cucker-Smale model with reaction-type delays in the non-oscillatory regime},
		\newblock{\em Kinet.
			Relat. Models}, 13, (2020), 795--813.
		
		
		
		
		\bibitem{HK}
		R. Hegselmann and U. Krause,
		\newblock {Opinion dynamics and bounded confidence models, analysis, and simulation,}
		\newblock {\em J. Artif. Soc. Soc. Simul.}, 5, (2002), 1--24.
		
		
		
		
		
		
		\bibitem{LW}
		Y. Liu and J. Wu,
		\newblock{Flocking and asymptotic velocity of the Cucker-Smale model with processing
			delay},
		\newblock{\em
			J. Math. Anal. Appl.,} 415, (2014), 53--61.
		
		
		
		
		\bibitem{Lu}
		J. Lu, D. W. C. Ho and J. Kurths,
		\newblock{Consensus over directed static networks with arbitrary finite communications delays},
		\newblock {\em Phys. Rev. E}, 80, (2009), 066121, 7 pp.
		
		
		\bibitem{Kuramoto}
		Y. Kuramoto, 
		\newblock{\em Chemical oscillations, waves, and turbulence},
		\newblock {Springer Ser. Synergetics}, 19, Springer-Verlag, 1984.
		
		\bibitem{Marsan}
		G. A. Marsan, N. Bellomo and M. Egidi,
		\newblock {Towards a mathematical theory of complex socio-economical systems by functional subsystems representation},
		\newblock {\em  Kinet. Relat. Models}, 1, (2008), 249--278.
		
		\bibitem{P}
		A. Paolucci,
		\newblock{ Convergence to consensus for a Hegselmann-Krause-type model with
			distributed time delay},
		\newblock{\em Minimax Theory Appl.,} 6, (2021), 379--394.

		
		
		
		\bibitem{PRT}
		B.~Piccoli, F.~Rossi and E.~Tr\'{e}lat.
		\newblock Control to flocking of the kinetic Cucker-Smale model.
		\newblock{\em
			SIAM J. Math. Anal.,}  47, (2015), 4685--4719.
		
		\bibitem{PR}
		C. Pignotti and I. Reche Vallejo,
		\newblock{Flocking estimates for the Cucker-Smale model with time lag and hierarchical leadership},
		\newblock {\em  J. Math. Anal. Appl.}, 464,
		(2018), 1313--1332.
		
		\bibitem{PT} {C. Pignotti and E. Tr\'elat},
		\newblock{Convergence to consensus of the general finite-dimensional Cucker-Smale model with time-varying delays},
		\newblock {\em Commun. Math. Sci.}, 16, (2018), 2053--2076.
		
		\bibitem{Campi}
		S. Y. Pilyugin and M. C. Campi,
		\newblock {Opinion formation in voting processes under bounded confidence},
		\newblock{\em Netw. Heterog. Media}, 14, (2019), 617--632.
		
		\bibitem{Cartabia}
		M. Rodriguez Cartabia,
		\newblock {Cucker-Smale model with time delay},
		\newblock{\em Discrete Contin. Dynam. Systems}, 42, (2022), 2409--2432.
		
		
		\bibitem{WCB}
		S. Wongkaew, M. Caponigro and A. Borz\`i,
		\newblock On the control through leadership of the Hegselmann-Krause opinion formation model.
		\newblock {\em Math. Models Methods Appl. Sci.,} 25, (2015), 565--585.
		
	\end{thebibliography}
\end{document}